\documentclass[a4paper,11pt]{article}

\addtolength{\topmargin}{-1cm}
\addtolength{\textheight}{2cm}
\addtolength{\oddsidemargin}{-1cm}
\addtolength{\textwidth}{2cm}

\parindent0ex
\parskip1ex

% bibliography style
\usepackage[sort&compress,square,comma,numbers]{natbib}

\usepackage{amsmath,amsfonts,amssymb,amsthm}
\usepackage{mathrsfs} % \mathscr{}
\usepackage{url}
\usepackage{color}
\usepackage{enumerate}
\usepackage{hyperref,pgf}
\usepackage{setspace}
\usepackage{xspace}

\DeclareMathOperator{\im}{im} % like \ker
\DeclareMathOperator{\sign}{sign}
 % like \ln
\DeclareMathOperator{\diag}{diag}

\DeclareMathOperator{\conv}{conv}

\newcommand{\mmF}{\mathcal{F}}
\newcommand{\mmG}{\mathcal{G}}
\newcommand{\mmA}{\mathcal{A}}
\newcommand{\mmB}{\mathcal{B}}
\newcommand{\mmq}{q}
\newcommand{\mmm}{m}
\newcommand{\mmM}{\mathcal{M}}
\newcommand{\mmQ}{\mathcal{Q}}
\newcommand{\mmS}{\mathcal{S}}
\newcommand{\mmW}{\mathcal{W}}

\newcommand{\R}{{\mathbb R}}

\newcommand{\dd}[2]{\frac{\text{d} #1}{\text{d} #2}}
\newcommand{\DD}[2]{\frac{\partial #1}{\partial #2}}
\newcommand{\del}{\Delta}

\newcommand{\had}{\ast}
\newcommand{\map}[2]{(#1 \to #2)}

\renewcommand{\k}{\kappa}

\newcommand{\wrt}{on cosets of\xspace}
\newcommand{\inj}{injective\xspace}
\newcommand{\injS}{\inj \wrt~$S$\xspace}
\newcommand{\rel}{affine\xspace}

\theoremstyle{plain}
\newtheorem{theorem}{Theorem}[section]
\newtheorem{proposition}[theorem]{Proposition}

\newtheorem{lemma}[theorem]{Lemma}

\theoremstyle{definition}
\newtheorem{definition}[theorem]{Definition}
\newtheorem{example}[theorem]{Example}
\newtheorem{remark}[theorem]{Remark}
\newtheorem*{notation}{Notation}

\makeatletter
\def\blfootnote{\xdef\@thefnmark{}\@footnotetext}
\makeatother

\begin{document}

\title{Characterizing injectivity of classes of maps \\ via classes of matrices}

\renewcommand{\thefootnote}{\fnsymbol{footnote}}

\author{% 
Elisenda Feliu, Stefan M\"uller, Georg Regensburger
}
\blfootnote{
\scriptsize

\noindent
{\bf E.~Feliu} (\href{efeliu@math.ku.dk}{efeliu@math.ku.dk}),
Department of Mathematical Sciences, University of Copenhagen, Universitetsparken 5, 2100 Denmark.

\smallskip
\noindent
{\bf S.~M\"uller} (\href{mailto:st.mueller@univie.ac.at}{st.mueller@univie.ac.at}),
Faculty of Mathematics, University of Vienna, Oskar-Morgenstern-Platz 1, 1090 Wien, Austria

\smallskip
\noindent
{\bf G.~Regensburger} (\href{mailto:georg.regensburger@jku.at}{georg.regensburger@jku.at}),
Institute for Algebra, Johannes Kepler University Linz,
Altenbergerstra{\ss}e 69, 4040 Linz, Austria.

\smallskip
\noindent
All authors contributed equally.
}

\date{\today}

\maketitle

\begin{abstract}
We present a framework for characterizing injectivity of classes of maps (on cosets of a linear subspace) by injectivity of classes of matrices. 
Using our formalism,
we characterize injectivity of several classes of maps, including generalized monomial and monotonic (not necessarily continuous) maps. In fact, monotonic maps are special cases of {\em component-wise affine} maps. Further, we study compositions of maps with a matrix and other composed maps, in particular, rational functions.
Our framework covers classical injectivity criteria based on mean value theorems for vector-valued maps and recent results obtained in the study of chemical reaction networks.

\medskip
{\bf Keywords:} univalent map,
generalized monomial map, monotonic map,
sign pattern matrix, mean value theorem,
chemical reaction network

\medskip
{\bf Mathematics Subject Classification (2010):} 26B10, 15B35, 80A30.
\end{abstract}

% ========= ========= ========= ========= ========= ========= ========= =========

\section{Introduction} \label{sec:intro}

If the Jacobian matrix of a differentiable map is injective everywhere,
then the implicit function theorem guarantees local, but not global injectivity of the map.
Only additional conditions on the Jacobian matrix and the domain of the map ensure global injectivity.
For example, the Gale-Nikaid\^o theorem states that, if the Jacobian matrix is a P-matrix everywhere and the domain is rectangular, then the map is injective~\cite{Gale:1965p474}.
By the mean value theorem for vector-valued functions,
the same conclusion holds
if every matrix in the convex hull of Jacobian matrices is injective and the domain is open and convex~\cite{Coomes1989,McLeod1965}.
For further results and references on global injectivity, see~\cite{Parthasarathy}.
In the context of dynamical systems, sufficient conditions for injectivity are used to derive necessary conditions for the existence of multiple equilibria~\cite{gouze,soule}.

In this work, we present a unifying framework that covers classical and recent criteria for injectivity.
Our formalism allows (i) to {\em characterize} injectivity of {\em sets} of maps,
(ii) to consider {\em non-differentiable} maps,
and (iii) to address injectivity {\em on cosets of a linear subspace}. 
The three aspects are motivated by the study of chemical reaction networks,
where one considers dynamical systems of the form
\[
\dd{x}{t} = A \, G(x), \quad x \in X \subseteq \R^n_\ge .
\]
Typically, the map $G$ belongs to a set of maps $\mmG$ (from $X$ to $\R^r_\ge$) such as a class of generalized monomial maps (with fixed exponents, but variable coefficients) or a class of weakly monotonic, not necessarily continuous maps (with fixed sign pattern). Accordingly, the matrix $A$ belongs to $\R^{r \times n}$.
Clearly, $\dd{x}{t}$ is confined to the linear subspace $S = \im(A)$,
and solutions $x(t)$ are confined to affine subspaces. Hence, injectivity of the composed map $A \, G$ on cosets of $S$ precludes the existence of multiple equilibria (solutions~$x$ to $A \, G(x)=0$) on the invariant subspaces. 
Most importantly,
injectivity of particular sets of generalized polynomial maps (with $\im(A)=\R^n$)
has been characterized by
injectivity of the corresponding sets of Jacobian matrices~\cite{ME_I,CGS}.
For subsequent results,
see e.g.~\cite{ME_entrapped,
FeliuWiuf_MAK,gnacadja_linalg} (for injectivity on cosets of $S$),
\cite{BanajiDonnell,ShinarFeinberg2012,WiufFeliu_powerlaw} (for other sets of maps),
\cite{TSS,MR2012,MFR2016} (for sign conditions and applications to real algebraic geometry),
and \cite{BanajiPantea2016} (for a comprehensive treatment).

In our framework, we single out a key argument underlying previous works.
Given a set of maps $\mmG$ (from $X \subseteq \R^n$ to $\R^r$), assume there exists a set of matrices $\mmB$ (in $\R^{r \times n}$) and a subset $T$ (of $\R^n$) such that 
\[ 
\{ G(x)-G(y) \mid G \in \mmG, \, x,y \in X, \, x \neq y \} = \{ B(z) \mid B \in \mmB, \, z \in T\}. 
\]
Then all maps in $\mmG$ are \inj if and only if the set on the right-hand side does not contain zero, that is, no element in $T$ belongs to the kernel of any matrix in $\mmB$. In this way, injectivity of the set of maps $\mmG$ is reduced to a linear algebra problem.

We construct suitable sets of matrices $\mmB$ for several sets of maps.
For example, for a class of monotonic maps,
$\mmB$ is a qualitative class of matrices given by a sign pattern.
In fact, monotonic maps are special cases of {\em component-wise affine} maps for which $\mmB$ is given as the Cartesian product of subsets of $\R$.
Further, we consider composed maps such as rational functions (compositions of monomial and monotonic maps),
we provide examples to compare different choices of $\mmB$ for the same set of maps, and we study the effect of the domain $X$ 
(being rectangular, the positive orthant, open and convex, etc.). 
For classes of generalized monomial maps,
our results do not follow from classical injectivity criteria based on mean value theorems for vector-valued functions or the fundamental theorem of calculus.
On the other hand, the Gale-Nikaid\^o theorem (for individual maps) does not follow from our formalism (for sets of maps).

The paper is organized as follows. In Section~\ref{sec:frame}, we present our general framework. In Section~\ref{sec:classes}, we characterize injectivity of classes of maps,
in particular, component-wise affine, generalized monomial, and monotonic maps, and we study compositions with a matrix.
In Section~\ref{sec:diff}, we discuss how our framework covers classical results using mean value theorems for vector-valued functions. Finally, in Section~\ref{sec:compose}, we consider more examples of composed maps, in particular, rational functions.

% ========= ========= ========= ========= ========= ========= ========= =========

\clearpage

\section{Mathematical framework} \label{sec:frame}

We characterize injectivity of a set of maps $\mmG$ (on cosets of a linear subspace) using a set of matrices $\mmB$. 
In order to find a suitable set of matrices,
we introduce the notion of \emph{affinity}.
Further, we consider composition of maps.

% ========= ========= ========= ========= ========= ========= ========= =========

\subsection{Notation}
 
We denote the strictly positive real numbers by $\R_>$ and the non-negative real numbers by $\R_\ge$.
For $x,y \in \R^n$,
we denote the component-wise (or Hadamard) product by $x \had y \in \R^n$, that is, $(x \had y)_i = x_i y_i$.
For a subset $X \subseteq \R^n$,
we write $X^* = X \setminus \{0\}$
and
\[
dX = \{ x-y \mid x,y \in X, \, x\neq y \}.
\]
For any natural number $n$, we define $[n]=\{1,\dots,n\}$. 

\paragraph{Maps and matrices.}
Let $X$, $Y$ be sets.
For the set of all maps from $X$ to $Y$ we write $\map{X}{Y}$.
Let $\mmG \subseteq \map{X}{Y}$ be a set of maps.
We say that $\mmG$ has a certain property,
if every map $G \in \mmG$ has this property.
If $\mmG = \{G\}$, we simply write $G$ for $\mmG$.
We denote the image of $S \subseteq X$ under $G \colon X\to Y$ by $G(S)$
and define
\begin{equation*}
\mmG(S) = \bigcup_{G \in \mmG} G(S).
\end{equation*}

Let $X$, $Y$, and $Z$ be sets
and $\mmG \subseteq \map{X}{Y}$ and $\mmF \subseteq \map{Y}{Z}$ be sets of maps.
We write
\begin{equation*}
\mmF \circ \mmG = \{ F \circ G \mid G \in \mmG, \, F \in \mmF \} \subseteq \map{X}{Z}
\end{equation*}
for the set of composed maps.
 
We identify a matrix $B \in \R^{r \times n}$ with the 
corresponding linear map $B \colon \R^n \to \R^r$
and write $\im(B)$ and $\ker(B)$ for the respective linear subspaces.
For a set of matrices $\mmB \subseteq \R^{r \times n}$, we define
\[
\ker(\mmB) = \bigcup_{B \in \mmB} \ker(B).
\]
For sets of matrices  $\mmB \subseteq \R^{r \times n}$ and $\mmA \subseteq \R^{m \times r}$, we write the set of product matrices as
\[
\mmA\mmB = \{AB\in \R^{r\times n} \mid B \in \mmB, \, A\in \mmA\}.
\]

\paragraph{Sign vectors.}
For a vector $x \in \R^n$, we obtain the \emph{sign vector} $\sigma(x)\in \{-,0,+\}^n$
by applying the sign function component-wise.
For a subset $S \subseteq \R^n$, we write
\begin{equation*} 
\sigma(S) = \{ \sigma(x) \mid x \in S \}  
\end{equation*}
for the set of all sign vectors of $S$
and 
\[
\Sigma(S) = \sigma^{-1}(\sigma(S))
\]
for the union of all (possibly lower-dimensional) orthants that $S$ intersects.
For $x,y \in \R^n$, we have  $\sigma(x) = \sigma(y)$ if and only if $x = \lambda \had y$ for some $\lambda \in \R_>^n$,
and hence
\[
\Sigma(S) = \{\lambda \had x \mid \lambda \in \R_>^n \text{ and } x \in S\}.
\]
For subsets $X,Y \subseteq \R^n$, we have the equivalences
\begin{equation} \label{eq:sigmaSigma}
\Sigma(X) \cap Y = \emptyset
\quad \Leftrightarrow \quad
\sigma(X) \cap \sigma(Y) = \emptyset
\quad \Leftrightarrow \quad
X \cap \Sigma(Y) = \emptyset .
\end{equation}
The inequalities $0<-$ and $0<+$ induce a partial order on $\{-,0,+\}^n$:
for sign vectors $\tau, \rho \in \{-,0,+\}^n$, we write $\tau \le \rho$ if the inequality holds component-wise.

% ========= ========= ========= ========= ========= ========= ========= =========

\subsection{Injectivity}

For $X \subseteq \R^n$ and $G \colon X \to \R^r$,
let
\[
\del G = \{ G(x)-G(y) \mid x,y \in X, \, x \neq y \} .
\]
By definition, $G$ is \inj if $G(x) \neq G(y)$ for all $x,y \in X$ with $x \neq y$,
that is, if $0 \notin \del G$. Similarly, let
\[
\del \mmG = \bigcup_{G\in \mmG} \del G,
\]
and $\mmG$ is injective if and only if $0\notin \del \mmG$.

Motivated by applications to dynamical systems, in particular,
by the study of solutions to $\dd{x}{t} = A\, F(x)=0$ on cosets of $\im(A)$, 
we study injectivity on cosets of an arbitrary linear subspace $S \subseteq \R^n$.
For $x \in X$, we write
\[
X_x = (x+S) \cap X
\]
for the intersection of the coset given by $x$ and the domain $X$
and further $G_x$ for the map $G$ with domain restricted to $X_x$.
We observe that 
$\bigcup_{x \in X} X_x = X$
and
\begin{equation} \label{eq:dXx}
\bigcup_{x \in X} dX_x = dX \cap S.
\end{equation}
We say that $G$ is \injS if $G_x$ is \inj for all $x \in X$.
We define
\begin{align*}
\del_S G &= \bigcup_{x \in X} \del G_x \\
&= \big\{ G(x)-G(y) \mid x,y \in X, \, x-y \in S^* \big\} 
\end{align*}
and, for a set of maps $\mmG \subseteq \map{X}{\R^r}$,
\[\del_S \mmG = \bigcup_{G \in \mmG} \del_S G. \]
Hence, $\mmG$ is \injS if and only if $0 \notin \del_S \mmG$.
Unrestricted injectivity corresponds to $S = \R^n$ and hence $\del_S \mmG = \del \mmG$. 

If the set of differences $\del_S \mmG$ can be written as the image of some set $T$ under a set of linear maps $\mmB$,
we can characterize injectivity of $\mmG$ by the following observation.

\begin{lemma} \label{lem:frame}
Let $X,S,T \subseteq \R^n$ with $S$ a linear subspace, $\mmG \subseteq \map{X}{\R^r}$, and $\mmB \subseteq \R^{r\times n}$. 
If $\del_S \mmG = \mmB(T)$,
then the following statements are equivalent:
\begin{enumerate}[(i)]
\item $\mmG$ is \injS.
\item $0 \notin \mmB(T)$. 
\end{enumerate}
If $\del_S \mmG \subseteq \mmB(T)$, then (ii) implies (i).
\end{lemma}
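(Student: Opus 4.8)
The plan is to observe that the lemma follows directly from the definition of injectivity on cosets of $S$ together with the hypothesis relating $\del_S \mmG$ and $\mmB(T)$. The key fact, recorded immediately before the statement, is that a set of maps $\mmG$ is \injS if and only if $0 \notin \del_S \mmG$; this reduces everything to tracking whether $0$ belongs to a prescribed set of vectors.

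For the equivalence of (i) and (ii) under the equality $\del_S \mmG = \mmB(T)$, I would simply substitute one set for the other. Since the two sets coincide,
\[ 0 \notin \del_S \mmG \quad\Longleftrightarrow\quad 0 \notin \mmB(T), \]
and combining this with the characterization above gives (i) $\Leftrightarrow$ (ii).

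For the final implication I would use only the inclusion $\del_S \mmG \subseteq \mmB(T)$. Assuming (ii), that is $0 \notin \mmB(T)$, the inclusion immediately forces $0 \notin \del_S \mmG$, whence $\mmG$ is \injS, which is (i).

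I do not anticipate a real obstacle: the entire argument is bookkeeping with set membership. The only place where the full equality (and not merely the inclusion) is used is the direction (i) $\Rightarrow$ (ii), where a zero difference $G(x)-G(y)$ must be re-expressed as $B(z)$ for some $B \in \mmB$ and $z \in T$; under the inclusion alone this direction can fail, which is exactly why (ii) $\Rightarrow$ (i) but not conversely. The substantive difficulty of the framework lies elsewhere --- namely in constructing, for a given class $\mmG$, a set of matrices $\mmB$ and a set $T$ that realize $\del_S \mmG = \mmB(T)$ --- and that is precisely the task carried out in the subsequent sections.
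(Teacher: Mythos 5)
Your proof is correct and takes essentially the same route as the paper's: both reduce (i) to the statement $0 \notin \del_S \mmG$ by definition and then transfer this across the equality (or inclusion) with $\mmB(T)$. Your additional remark about where the full equality is genuinely needed (the direction (i) $\Rightarrow$ (ii)) is accurate and consistent with the paper's framing.
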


\begin{proof} 
By definition, 
(i) is equivalent to $0 \notin \del_S \mmG$ and hence to (ii). % and (ii) to $0 \notin \mmB(T)$.
\end{proof}

In general, the characterization of injectivity in Lemma~\ref{lem:frame} is useful only for sets of maps $\mmG$, but not for individual maps $G$.

% ========= ========= ========= ========= ========= ========= ========= =========

\subsection{Affinity} \label{sec:aff}

For proving injectivity of a set of maps $\mmG$
by Lemma~\ref{lem:frame},
one needs to find a set of matrices $\mmB$ (and a set $T$) such that $\del_S \mmG \subseteq \mmB(T)$.
Often, $\mmB$ is obtained from a more restrictive requirement, namely that $\mmG$ is $\mmB$-\rel. 
\begin{definition} 
Let $X \subseteq \R^n$, $G \colon X \to \R^r$, and $\mmB \subseteq \R^{r\times n}$ be a set of matrices.
Then $G$ is called \emph{$\mmB$-\rel} if, for all $x,y \in X$,
there exists $B\in \mmB$ such that
\[
G(x)-G(y)=B(x-y).
\]
\end{definition}

For characterizing injectivity, one additionally requires $\mmB \subseteq \mmG$, that is, the set of maps contains all linear maps guaranteeing its affinity.

\begin{lemma} \label{lem:aff} 
Let $X,S \subseteq \R^n$ with $S$ a linear subspace, $\mmG \subseteq \map{X}{\R^r}$, and $\mmB \subseteq \R^{r\times n}$.
If $\mmG$ is $\mmB$-\rel, then
\[
\del_S \mmG \subseteq \mmB(dX \cap S) .
\]
If additionally $\mmB\subseteq \mmG$, then
\[
\del_S \mmG = \mmB(dX \cap S).
\]
\end{lemma}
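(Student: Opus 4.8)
The plan is to prove the inclusion first and then, under the additional hypothesis $\mmB \subseteq \mmG$, the reverse inclusion, giving equality. Throughout I will work directly with the set-theoretic description of $\del_S \mmG$, namely
\[
\del_S \mmG = \big\{ G(x)-G(y) \mid G \in \mmG, \, x,y \in X, \, x-y \in S^* \big\},
\]
and the description $\mmB(dX \cap S) = \{ B(z) \mid B \in \mmB, \, z \in dX \cap S \}$.

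\textbf{The inclusion $\del_S \mmG \subseteq \mmB(dX \cap S)$.} First I would take an arbitrary element of $\del_S \mmG$, which by definition has the form $G(x)-G(y)$ for some $G \in \mmG$ and $x,y \in X$ with $x - y \in S^*$ (so in particular $x \neq y$). Since $\mmG$ is $\mmB$-\rel, the definition of affinity supplies a matrix $B \in \mmB$ with $G(x)-G(y) = B(x-y)$. It then suffices to observe that $z := x-y$ lies in $dX \cap S$: indeed $z \in dX$ because $x,y \in X$ with $x \neq y$, and $z \in S$ because $x - y \in S^*\subseteq S$. Hence $G(x)-G(y) = B(z) \in \mmB(dX \cap S)$, which proves the inclusion.

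\textbf{The reverse inclusion under $\mmB \subseteq \mmG$.} Now I would take an arbitrary element $B(z) \in \mmB(dX \cap S)$, with $B \in \mmB$ and $z \in dX \cap S$. By the definition of $dX$, I can write $z = x - y$ for some $x,y \in X$ with $x \neq y$, and since $z \in S$ we have $x-y \in S^*$. The key step is that $\mmB \subseteq \mmG$ lets me regard $B$ itself as a map in $\mmG$; applying this map to $x$ and $y$ gives $B(x) - B(y) = B(x-y) = B(z)$ by linearity of $B$. Since $x-y \in S^*$, this difference is exactly of the form recorded in $\del_S \mmG$, so $B(z) \in \del_S \mmG$. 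Combining the two inclusions yields the asserted equality.

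\textbf{Main obstacle.} The argument is essentially a bookkeeping exercise in unwinding the three definitions (affinity, $\del_S$, and $\mmB(dX\cap S)$), so there is no deep obstacle. The one point requiring care is the reverse inclusion: the hypothesis $\mmB \subseteq \mmG$ must be invoked correctly so that a matrix $B$ is legitimately interpreted as an element of the map set $\mmG$ with the same domain $X$, and one must use linearity to identify $B(x)-B(y)$ with $B(z)$. I would also note in passing the consistency of this with Lemma~\ref{lem:frame}: taking $T = dX \cap S$, the first part shows $\del_S \mmG \subseteq \mmB(T)$ (so that (ii) implies (i)), while the equality under $\mmB \subseteq \mmG$ gives $\del_S \mmG = \mmB(T)$ (so that (i) and (ii) are equivalent).
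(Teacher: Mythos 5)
Your proof is correct and takes essentially the same route as the paper's: the reverse inclusion is verbatim the paper's argument, and your element-wise check of the forward inclusion is just an unpacked version of the paper's bookkeeping via the coset decomposition $\del_S \mmG = \bigcup_{x \in X} \del \mmG_x$ and the identity $\bigcup_{x \in X} dX_x = dX \cap S$. Nothing is missing.
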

\begin{proof}
For $x \in X$, we write 
$\mmG_x$ for the set of maps $\mmG$ with domain restricted to the coset $X_x$.
Assume $\mmG$ is $\mmB$-\rel. Then, $\mmG_x$ is $\mmB$-\rel for all $x \in X$
and by definition we have $\del \mmG_x \subseteq \mmB(dX_x)$. 
Hence, using \eqref{eq:dXx},
\[
\del_S \mmG = \bigcup_{x \in X} \del \mmG_x \subseteq \bigcup_{x \in X} \mmB(dX_x) = 
\mmB\Big(\bigcup_{x \in X} dX_x\Big) = \mmB(dX \cap S) .
\]
Additionally assume $\mmB \subseteq \mmG$. Let $B \in \mmB$ and $z \in dX\cap S$. Then, $z = x-y\in S^*$ with $x,y \in X$ and $x\neq y$, 
and $B z = B (x-y) = B(x)- B(y) \in \del_S \mmG$ since $B \in \mmB \subseteq \mmG$.
\end{proof}

In the case of unrestricted injectivity, $S = \R^n$,
if $\mmG$ is $\mmB$-\rel, then $\del \mmG \subseteq \mmB(dX)$.
If additionally $\mmB\subseteq \mmG$, then $\del \mmG = \mmB(dX)$.

Now, we can combine Lemmas~\ref{lem:frame} and~\ref{lem:aff}.

\begin{proposition} \label{pro:main} \label{pro:maincoset}
Let $X,S  \subseteq \R^n$ with $S\subseteq \R^n$ a linear subspace, $\mmG \subseteq \map{X}{\R^r}$, and $\mmB \subseteq \R^{r\times n}$.
If $\del_S \mmG = \mmB(dX\cap S)$,
in particular, if $\mmG$ is $\mmB$-\rel and $\mmB \subseteq \mmG$,
then the following statements are equivalent:
\begin{enumerate}[(i)]
\item $\mmG$ is \injS.
\item $0 \notin \mmB (dX \cap S)$.
\end{enumerate}
If $\del_S \mmG \subseteq \mmB(dX \cap S)$,
in particular, if $\mmG$ is $\mmB$-\rel,
then (ii) implies (i).
\end{proposition}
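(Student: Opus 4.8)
The plan is to obtain Proposition~\ref{pro:main} by straightforwardly chaining the two lemmas already proved, so the argument is essentially bookkeeping rather than new mathematics. The key observation is that Lemma~\ref{lem:aff} converts the affinity hypothesis into a statement about $\del_S\mmG$ and $\mmB(dX\cap S)$, which is exactly the hypothesis needed to invoke Lemma~\ref{lem:frame} with the specific choice $T = dX\cap S$. So the whole proof is a matter of matching up hypotheses and applying the right lemma to each of the three cases in the statement.

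First I would handle the equivalence of (i) and (ii). The hypothesis $\del_S\mmG = \mmB(dX\cap S)$ is precisely the equality hypothesis of Lemma~\ref{lem:frame} with $T := dX\cap S$. Applying that lemma immediately yields that $\mmG$ is \injS\ if and only if $0\notin\mmB(dX\cap S)$, which is the claimed equivalence of (i) and (ii). To justify the ``in particular'' clause, I would note that when $\mmG$ is $\mmB$-\rel\ and $\mmB\subseteq\mmG$, the second part of Lemma~\ref{lem:aff} gives exactly $\del_S\mmG = \mmB(dX\cap S)$, so the equality hypothesis is met and the equivalence follows.

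Next I would treat the final assertion, where only the inclusion $\del_S\mmG\subseteq\mmB(dX\cap S)$ is assumed. Here I would again apply Lemma~\ref{lem:frame}, this time using its last sentence: when $\del_S\mmG\subseteq\mmB(T)$ with $T = dX\cap S$, statement (ii) implies (i). For the corresponding ``in particular'' clause, the first part of Lemma~\ref{lem:aff} shows that $\mmG$ being $\mmB$-\rel\ alone already forces $\del_S\mmG\subseteq\mmB(dX\cap S)$, so the inclusion hypothesis holds and (ii)$\Rightarrow$(i) follows.

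I do not expect any genuine obstacle here, since Proposition~\ref{pro:main} is a direct corollary combining the equality/inclusion dichotomy of Lemma~\ref{lem:aff} with the equivalence/implication dichotomy of Lemma~\ref{lem:frame}. The only point requiring mild care is the consistent choice $T = dX\cap S$ throughout, and checking that the ``equality'' branches of the two lemmas line up (giving the biconditional) while the ``inclusion'' branches line up (giving only the one-directional implication). Once that correspondence is made explicit, the proof is complete.
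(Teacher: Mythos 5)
Your proof is correct and matches the paper exactly: the paper gives no separate proof of Proposition~\ref{pro:maincoset}, simply noting that it is obtained by combining Lemmas~\ref{lem:frame} and~\ref{lem:aff}, which is precisely the chaining (with $T = dX\cap S$) that you spell out. No gaps.
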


If $X \subseteq \R^n$ has non-empty interior,
then $dX \cup \{0\}$ contains a ball around zero,
and condition (ii) in Proposition~\ref{pro:maincoset} can be rewritten:
\begin{equation} \label{eq:ker_B_S}
0 \notin \mmB (dX \cap S)
\quad \Leftrightarrow \quad
0 \notin \mmB (S^*) 
\quad \Leftrightarrow \quad
\ker(\mmB)\cap S =\{0\} .
\end{equation}
Let $Z \in \R^{(n-s)\times n}$ with $s=\dim S$ such that $S=\ker Z$
and let
\[
M_B =
\begin{pmatrix}
Z \\ B
\end{pmatrix} .
\]
Then further
\[
\ker(\mmB)\cap S =\{0\} 
\quad \Leftrightarrow \quad
\ker(M_B) = \{0\} \quad \text{for all } B \in \mmB .
\]
Finally, if $r=s$, then $M_B$ is a square matrix and the condition is equivalent to
\begin{equation} \label{total_determinental}
\det(M_B)\neq 0 \quad \text{for all } B\in \mmB .
\end{equation}
Determinant conditions for injectivity play a prominent role in the study of chemical reaction networks, 
see for example \cite{ME_I,CGS,
BanajiDonnell,
FeliuWiuf_MAK,WiufFeliu_powerlaw,feliu-bioinfo,feliu_newinj,
MFR2016,BanajiPantea2016}.

% ========= ========= ========= ========= ========= ========= ========= =========

\subsection{Composition} 

Assuming that a set of maps $\mmF$ is $\mmA$-\rel, we consider composed maps $\mmF \circ \mmG$ and the resulting sets $\del_S(\mmF \circ \mmG)$.

\begin{lemma} \label{lem:comp}
Let $X,S,T \subseteq \R^n$ with $S$ a linear subspace, $Y \subseteq \R^r$, 
$\mmG \subseteq \map{X}{Y}$, $\mmB \subseteq \R^{r \times n}$, 
$\mmF \subseteq \map{Y}{\R^m}$,  and $\mmA \subseteq \R^{m \times r}$.
In particular, let $\mmF$ be $\mmA$-\rel.
\begin{enumerate}[(a)]
\item  If $\del_S \mmG \subseteq \mmB(T)$,
then
\[
\del_S (\mmF \circ \mmG) \subseteq \mmA\mmB (T) .
\]
\item If $\del_S \mmG = \mmB(T)$ and $\mmA \subseteq \mmF$, 
then
\[
\del_S (\mmF \circ \mmG) = \mmA\mmB (T) .
\]
\item If $\mmG$ is $\mmB$-\rel, then $\mmF\circ \mmG$ is $\mmA\mmB$-\rel.
\end{enumerate} 
\end{lemma}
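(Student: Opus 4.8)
The backbone of all three parts is a single application of the $\mmA$-affinity of $\mmF$. For any $F \in \mmF$, $G \in \mmG$, and $x,y \in X$, the points $G(x),G(y)$ lie in $Y$ (because $G \in \map{X}{Y}$), so the $\mmA$-affinity of $F$ supplies some $A \in \mmA$ with
\[
(F \circ G)(x)-(F \circ G)(y) = F(G(x))-F(G(y)) = A\big(G(x)-G(y)\big).
\]
Each part is then obtained by substituting the appropriate description of the inner difference $G(x)-G(y)$ and using the associativity identity $A(Bz)=(AB)z$ together with $AB \in \mmA\mmB$. I would prove the parts in the order (c), (a), (b), since the computation in (c) is the most self-contained.

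For (c), I would fix an arbitrary composed map $F \circ G \in \mmF \circ \mmG$ and arbitrary $x,y \in X$. Since $\mmG$ is $\mmB$-\rel, there is a $B \in \mmB$ with $G(x)-G(y)=B(x-y)$; plugging this into the displayed identity gives $(F \circ G)(x)-(F \circ G)(y)=A\,B(x-y)=(AB)(x-y)$ with $AB \in \mmA\mmB$. As $F$, $G$, $x$, $y$ were arbitrary, every map in $\mmF \circ \mmG$ is $\mmA\mmB$-\rel.

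For (a), I would take an arbitrary element of $\del_S(\mmF\circ \mmG)$, namely $(F\circ G)(x)-(F \circ G)(y)$ with $x-y \in S^*$. The displayed identity rewrites it as $A(G(x)-G(y))$ for some $A \in \mmA$, where I use only the $\mmA$-affinity of $F$ (not any affinity of $G$). Because $x-y \in S^*$, the inner difference satisfies $G(x)-G(y) \in \del_S G \subseteq \del_S \mmG \subseteq \mmB(T)$, so $G(x)-G(y)=B(t)$ for some $B \in \mmB$, $t \in T$. Hence the element equals $(AB)(t) \in \mmA\mmB(T)$, proving the inclusion.

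For (b), the inclusion $\del_S(\mmF \circ \mmG) \subseteq \mmA\mmB(T)$ follows at once from (a). For the reverse inclusion I would begin with an arbitrary $(AB)(t) \in \mmA\mmB(T)$, where $A \in \mmA$, $B \in \mmB$, $t \in T$. Since $B(t) \in \mmB(T)=\del_S \mmG$, there are $G \in \mmG$ and $x,y \in X$ with $x-y \in S^*$ and $G(x)-G(y)=B(t)$. This is the only step requiring care, and it is exactly where the extra hypothesis $\mmA \subseteq \mmF$ enters: $A$, regarded as a linear map restricted to $Y$, then lies in $\mmF$, so $A \circ G \in \mmF \circ \mmG$ is an honest composed map. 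Evaluating it gives $(A\circ G)(x)-(A\circ G)(y)=A(G(x)-G(y))=(AB)(t)$, which lies in $\del_S(\mmF\circ \mmG)$ since $x-y \in S^*$. The main (and essentially only) obstacle is thus bookkeeping: ensuring that the witnesses $x,y$ with $x-y \in S^*$ are produced by the equality $\del_S\mmG=\mmB(T)$, and that $\mmA \subseteq \mmF$ lets us realize each product $AB$ as a genuine element of $\mmF\circ\mmG$ rather than merely a matrix product.
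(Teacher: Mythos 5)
Your proposal is correct and follows essentially the same route as the paper: use the $\mmA$-affinity of $\mmF$ to rewrite $F(G(x))-F(G(y))$ as $A(G(x)-G(y))$, then substitute the given description of the inner difference for (a) and (c), and use $\mmA\subseteq\mmF$ to realize $AB z$ as an element of $\del_S(\mmF\circ\mmG)$ for the reverse inclusion in (b). You merely spell out a few steps the paper leaves terse (e.g.\ that the equality $\mmB(T)=\del_S\mmG$ produces the witnesses $G,x,y$ with $x-y\in S^*$, and that $A$ must be restricted to $Y$ to count as a member of $\mmF$), which is fine.
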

\begin{proof}
(a) Let $F \in \mmF$, $G \in \mmG$, and $x,y \in X$ such that $x-y\in S^*$.
Then, there exist $A \in \mmA$, $B \in \mmB$, and $z \in T$ such that $F(G(x))-F(G(y)) = A(G(x)-G(y)) = AB z$.

(b) It remains to show inclusion ($\supseteq$).
Let $A \in \mmA$, $B \in \mmB$, and $z \in T$.
Then there exist $G \in \mmG$ and $x,y \in X$ such that $x-y\in S^*$, and $AB z = A(G(x)-G(y)) = A(G(x))-A(G(y))\in \del_S (\mmF \circ \mmG)$ since $\mmA\subseteq \mmF$.

(c) Analogous to (a).
\end{proof}

Motivated by applications,
we extend Proposition~\ref{pro:maincoset},
by using Lemma~\ref{lem:comp}, the trivial fact that $A \in \R^{m \times r}$ is $\{A\}$-\rel, 
and the above equivalence.

\begin{proposition} \label{pro:framecomp}
Let $X,S \subseteq \R^n$ with $S$ a linear subspace, $Y \subseteq \R^r$, $\mmG \subseteq \map{X}{Y}$, 
$\mmB \subseteq \R^{r \times n}$, and $A \in \R^{m \times r}$.
If $\del_S \mmG = \mmB(dX \cap S)$, 
in particular, if $\mmG$ is $\mmB$-\rel and $\mmB \subseteq \mmG$,
then $\del_S (A\circ \mmG)= A\mmB(dX\cap S)$, and the following statements are equivalent:
\begin{enumerate}[(i)]
\item $A \circ \mmG$ is \injS.
\item $0 \notin A\mmB(dX\cap S)$.
\end{enumerate}
If $\del_S \mmG \subseteq \mmB(dX \cap S)$,  then $\del_S (A\circ \mmG)\subseteq A\mmB(dX\cap S)$, and (ii) implies (i).
\end{proposition}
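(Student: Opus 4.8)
The plan is to obtain this statement as a direct specialization of the composition Lemma~\ref{lem:comp}, instantiated with the singleton set of maps $\mmF = \{A\}$ and the singleton set of matrices $\mmA = \{A\}$, followed by an application of Lemma~\ref{lem:frame}. The guiding observation is that a single matrix $A \in \R^{m \times r}$, viewed as a linear map, is trivially $\{A\}$-\rel: for all $y_1, y_2 \in \R^r$ we have $A(y_1) - A(y_2) = A(y_1 - y_2)$, so the defining affinity equation holds with the only available matrix. Hence Lemma~\ref{lem:comp} applies with $\mmA = \{A\}$, and since the product set $\{A\}\mmB$ is exactly $A\mmB$ in the notation of the excerpt, the conclusions of that lemma translate verbatim into statements about $A \circ \mmG$.

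First I would establish the three set-level identities by invoking the appropriate parts of Lemma~\ref{lem:comp} with $T = dX \cap S$. Under the weaker hypothesis $\del_S \mmG \subseteq \mmB(dX \cap S)$, part~(a) of Lemma~\ref{lem:comp} (applied with $\mmA = \{A\}$, which is $\{A\}$-\rel) yields $\del_S(A \circ \mmG) \subseteq A\mmB(dX \cap S)$. Under the stronger hypothesis $\del_S \mmG = \mmB(dX \cap S)$, I would additionally note that $\mmA = \{A\} \subseteq \{A\} = \mmF$ holds trivially, so part~(b) of Lemma~\ref{lem:comp} applies and upgrades the inclusion to the equality $\del_S(A \circ \mmG) = A\mmB(dX \cap S)$. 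The parenthetical sufficient condition (that $\mmG$ be $\mmB$-\rel and $\mmB \subseteq \mmG$) is already known, via Lemma~\ref{lem:aff}, to imply $\del_S \mmG = \mmB(dX \cap S)$, so no separate argument is needed there.

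Next I would convert these identities into the injectivity characterization. By the definition of $\del_S$, the composed set $A \circ \mmG$ is \injS\ if and only if $0 \notin \del_S(A \circ \mmG)$. In the equality case, substituting $\del_S(A \circ \mmG) = A\mmB(dX \cap S)$ gives the equivalence of (i) and (ii) directly; this is precisely Lemma~\ref{lem:frame} applied to the composed set of maps $A \circ \mmG$ with matrix set $A\mmB$ and index set $T = dX \cap S$. In the inclusion case, $\del_S(A \circ \mmG) \subseteq A\mmB(dX \cap S)$ together with $0 \notin A\mmB(dX \cap S)$ forces $0 \notin \del_S(A \circ \mmG)$, which is statement~(i); this is the ``(ii) implies (i)'' half of Lemma~\ref{lem:frame}.

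I do not anticipate a genuine obstacle, since every ingredient is already in place; the proposition is essentially a packaging of Lemma~\ref{lem:comp} for the special case $m$-dimensional target and a single post-composing matrix. The only point requiring care is bookkeeping: verifying that $\{A\}\mmB$ coincides with the set $A\mmB$ as defined in the notation, and confirming that the hypotheses on $\del_S \mmG$ transfer correctly through the two regimes (equality versus inclusion) to the matching conclusions for $\del_S(A \circ \mmG)$. The mild subtlety worth stating explicitly is why affinity of the single map $A$ needs no hypothesis at all, namely that linearity of $A$ makes the affinity condition automatic, which is the ``trivial fact'' already flagged in the text preceding the statement.
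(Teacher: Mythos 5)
Your proposal is correct and follows essentially the same route as the paper, which obtains Proposition~\ref{pro:framecomp} precisely by combining Lemma~\ref{lem:comp} (with the trivial fact that $A$ is $\{A\}$-\rel, so that parts (a) and (b) give the inclusion and equality for $\del_S(A\circ\mmG)$ with $T=dX\cap S$) with the equivalence of Lemma~\ref{lem:frame}. No gaps; the bookkeeping points you flag (that $\{A\}\mmB=A\mmB$ and that $\mmA\subseteq\mmF$ holds trivially for singletons) are exactly the ones the paper leaves implicit.
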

 
If $X \subseteq \R^n$ has non-empty interior, then condition (ii) in Proposition~\ref{pro:framecomp} can be rewritten, cf.~equivalence~\eqref{eq:ker_B_S}:
\begin{equation} \label{eq:ker_AB_S}
\begin{aligned}
0 \notin A\mmB(dX\cap S)
\quad & \Leftrightarrow \quad 
0 \notin A\mmB(S^*) 
\quad \Leftrightarrow \quad
\ker(A\mmB) \cap S = \{0\}
\\
\quad & \Leftrightarrow \quad  
\ker(A) \cap \mmB(S^*) = \emptyset .
\end{aligned}
\end{equation}

% ========= ========= ========= ========= ========= ========= ========= =========

\section{Classes of maps} \label{sec:classes}

For characterizing injectivity of a set of maps $\mmG \subseteq \map{X}{\R^r}$ (on cosets of a linear subspace~$S$) by Proposition~\ref{pro:maincoset},
one needs to find a set of matrices $\mmB$ such that $\del_S \mmG = \mmB(dX \cap S)$.
Often, $\mmB$ is obtained from a more restrictive requirement,
namely that $\mmG$ is $\mmB$-\rel (and $\mmB \subseteq \mmG$),
in particular, that $\mmG$ is {\em component-wise} \rel, see Subsection~\ref{sec:component-wise}.

For the classes of generalized monomial and monotonic maps (and compositions thereof),
we characterize injectivity,
see Subsections~\ref{sec:monomial} and \ref{sec:monoton}.
Importantly, monotonic maps are special cases of component-wise affine maps.
We highlight the relations between monomial and polynomial maps and between monotonic and monomial/polynomial maps,
and we discuss how previous results (from the study of chemical reaction networks) are covered by our framework.

% ========= ========= ========= ========= ========= ========= ========= =========

\subsection{Component-wise affine maps} \label{sec:component-wise}

We consider sets of maps $\mmG$ (on a suitable domain) for which the sets of matrices $\mmB$ (such that $\mmG$ is $\mmB$-\rel) are given as the Cartesian product of subsets of $\R$.

\begin{definition}
Let $D=(d_{ij})$ be an $r \times n$ matrix of non-empty subsets $d_{ij} \subseteq \R$.
The set of matrices,
\begin{equation*}
\mmQ(D) = \{(b_{ij}) \in \R^{r \times n} \mid b_{ij} \in d_{ij} \} ,
\end{equation*}
is called the \emph{qualitative class} of $D$.
Further, let $X \subseteq \R^n$ and $G \colon X \to \R^r$.
We say that $G$ is \emph{component-wise $D$-\rel}, if 
\[
G_i(x)-G_i(y) = b_{ij} (x_j-y_j) \quad \text{with } b_{ij} \in d_{ij}
\]
for all $x,y \in X$ that differ only in the $j$-th component,
 that is, $x-y = (0, \ldots, x_j-y_j, \ldots, 0)^T$ (for $i \in [r]$, $j \in [n]$). 
\end{definition}

Clearly, every matrix in $\mmQ(D)$ is component-wise $D$-\rel. 
More generally, every $\mmQ(D)$-affine map is component-wise $D$-\rel.

\begin{lemma} \label{lem:D} 
Let $X \subseteq \R^n$, $G \colon X \to \R^r$,
and $D$ be an $r \times n$ matrix of non-empty subsets of $\R$.
If $G$ is $\mmQ(D)$-\rel, then $G$ is component-wise $D$-\rel.
\end{lemma}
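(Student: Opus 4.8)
The plan is to simply unwind the two definitions and observe that component-wise $D$-\rel is nothing more than the restriction of $\mmQ(D)$-affinity to pairs of points that differ in a single coordinate. First I would fix indices $i \in [r]$ and $j \in [n]$ and take arbitrary $x,y \in X$ that differ only in the $j$-th component, so that $x-y = (0,\dots,x_j-y_j,\dots,0)^T$. Since $G$ is assumed $\mmQ(D)$-\rel, the definition of affinity supplies a matrix $B=(b_{ij}) \in \mmQ(D)$ with $G(x)-G(y) = B(x-y)$.

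The key (and only substantive) step is to evaluate the $i$-th component of $B(x-y)$. Because the vector $x-y$ is supported on the single coordinate $j$, the matrix-vector product collapses to a single term,
\[
\big(B(x-y)\big)_i = \sum_{k=1}^n b_{ik}\,(x_k-y_k) = b_{ij}\,(x_j-y_j).
\]
Reading off the $i$-th component of the identity $G(x)-G(y)=B(x-y)$ then gives $G_i(x)-G_i(y) = b_{ij}\,(x_j-y_j)$, and the membership $b_{ij} \in d_{ij}$ is exactly the defining condition of the qualitative class $\mmQ(D)$. This is precisely what it means for $G$ to be component-wise $D$-\rel.

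I do not expect any real obstacle here: the statement is a direct specialization of the hypothesis to single-coordinate differences. The one point worth noting explicitly is that the witnessing matrix $B$ (and hence the scalar $b_{ij}$) is allowed to depend on the chosen pair $(x,y)$, but this causes no difficulty, since the definition of component-wise $D$-affinity likewise only requires, for each such pair, the existence of some $b_{ij} \in d_{ij}$, with no uniformity across pairs.
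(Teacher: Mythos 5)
Your proof is correct and follows essentially the same route as the paper's: fix a pair differing only in the $j$-th coordinate, invoke $\mmQ(D)$-affinity to get a witnessing matrix $B$, and observe that the matrix-vector product collapses to the single term $b_{ij}(x_j-y_j)$. The only difference is that you spell out the collapse of the sum and the pair-dependence of $B$ explicitly, which the paper leaves implicit.
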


\begin{proof}
Let $i \in [r]$, $j \in [n]$ and $x,y \in X$ that differ only in the $j$-th component.
If $G$ is $\mmQ(D)$-\rel, then there exists $B=(b_{ij})\in \mmQ(D)$ such that $G(x)-G(y)=B(x-y)$.
In particular
\[
G_i(x)-G_i(y) = b_{ij} (x_j - y_j)
\]
with $b_{ij}\in d_{ij}$. 
\end{proof}

As it turns out (in the following lemmas),
the two notions of affinity are equivalent
under additional assumptions on the domain $X$ and the matrix $D$.

By a \emph{rectangular domain}, we mean a Cartesian product of real intervals, without imposing any restriction on whether the intervals are open or closed.

\begin{lemma} \label{lem:Drectangular} 
Let $X \subseteq \R^n$ be a rectangular domain, $G \colon X \to \R^r$,
and $D$ be an $r \times n$ matrix of non-empty subsets of $\R$.
The following statements are equivalent:
\begin{enumerate}[(i)]
\item
$G$ is $\mmQ(D)$-\rel.
\item
$G$ is component-wise $D$-\rel.
\end{enumerate}
\end{lemma}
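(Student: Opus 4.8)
The plan is to prove the two implications separately, noting that (i) $\Rightarrow$ (ii) needs nothing beyond what is already established: it is precisely Lemma~\ref{lem:D}, which holds for any $X$ and any $D$. So I would dispose of that direction by a one-line citation and concentrate all the work on the converse (ii) $\Rightarrow$ (i), where the hypotheses on $X$ (rectangularity) and on the non-emptiness of the entries of $D$ actually enter.

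For (ii) $\Rightarrow$ (i), I would fix arbitrary $x,y \in X$ and construct a single matrix $B=(b_{ij}) \in \mmQ(D)$ with $G(x)-G(y)=B(x-y)$ by telescoping along an axis-aligned path. Writing the rectangular domain as $X = I_1 \times \cdots \times I_n$ with each $I_j$ a real interval, I set $z^{(k)} = (y_1,\dots,y_k,x_{k+1},\dots,x_n)$ for $k = 0,\dots,n$, so that $z^{(0)}=x$ and $z^{(n)}=y$. The crucial point — and the only place rectangularity is used — is that each $z^{(k)}$ lies in $X$, since every coordinate equals either $x_j$ or $y_j$, both of which belong to $I_j$. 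Consecutive points $z^{(k-1)}$ and $z^{(k)}$ differ only in the $k$-th coordinate, where it changes from $x_k$ to $y_k$, so the hypothesis that $G$ is component-wise $D$-\rel supplies, for each $i$, a scalar $b_{ik} \in d_{ik}$ with $G_i(z^{(k-1)})-G_i(z^{(k)}) = b_{ik}(x_k-y_k)$.

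Summing over $k$ and telescoping then yields $G_i(x)-G_i(y)=\sum_{k=1}^n b_{ik}(x_k-y_k)$, which is exactly the $i$-th entry of $B(x-y)$; assembling the $b_{ik}$ into the matrix $B$ gives $B \in \mmQ(D)$ and $G(x)-G(y)=B(x-y)$, as required.

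The one subtlety I expect to handle with care is the degenerate step where $x_k=y_k$: then $z^{(k-1)}=z^{(k)}$ and these points do not genuinely differ in a coordinate, so the defining equation of component-wise affinity need not hand me a scalar directly. But the identity $G_i(z^{(k-1)})-G_i(z^{(k)})=0=b_{ik}(x_k-y_k)$ holds for any value of $b_{ik}$, so I may simply choose an arbitrary element $b_{ik} \in d_{ik}$ — this is exactly where the assumption that every $d_{ij}$ is non-empty is needed. Everything else, namely checking that the telescoped sum reassembles into a matrix–vector product, is routine bookkeeping.
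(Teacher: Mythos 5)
Your proposal is correct and follows essentially the same route as the paper: the forward direction is cited from Lemma~\ref{lem:D}, and the converse is proved by telescoping $G_i$ along the axis-aligned sequence of intermediate points (the paper walks from $y$ to $x$, you walk from $x$ to $y$, which is immaterial), with the same explicit handling of the degenerate steps $x_k=y_k$ via non-emptiness of $d_{ik}$. Your remark that rectangularity is used precisely to guarantee the intermediate points lie in $X$ is the correct and only substantive point of the argument.
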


\begin{proof}
(i) $\Rightarrow$ (ii): By Lemma~\ref{lem:D}.

(ii) $\Rightarrow$ (i):
Let $x,y \in X$
and define a sequence $(z^j)_{j=0}^n$ from $y$ to $x$ by
\begin{equation*} 
z^0=y, \quad z^j = (x_1,\dots,x_j,y_{j+1},\dots,y_n) \text{ for } j \in [n-1], \quad \textrm{and } z^n=x.
\end{equation*}
Clearly, $z^j - z^{j-1} = (0, \ldots, x_j-y_j, \ldots, 0)^T$ for $j \in [n]$.
Now, assume that $G$ is component-wise $D$-\rel.
For $i \in [r]$, we obtain
\[
G_i(x) - G_i(y) = \sum_{j=1}^n G_i(z^j) - G_i(z^{j-1}) = \sum_{j=1}^n b_{ij} (x_j-y_j)
\]
with $b_{ij}\in d_{ij}$ for $j \in [n]$.
Note that, if $x_j=y_j$, then $b_{ij}$ can be chosen arbitrarily in $d_{ij}$.
Hence, $G(x)-G(y)=B(x-y)$ with $B=(b_{ij}) \in \mmQ(D)$.
\end{proof}

An analogous result holds for a more general domain (not necessarily rectangular), at the cost of restricting $D$ to a matrix of intervals (not arbitrary subsets).

Let $X \in \R^n$ and $x,y \in X$. 
A \emph{path from $y$ to $x$} is a sequence $(z^k)_{k=0}^N$, $z^k\in X$, with $z^0=y$ and $z^N=x$.
A path is called \emph{rectangular},
if successive elements differ only in one component.
That is, for all steps $k \in [N]$, there exists $j \in [n]$
such that $(z^k - z^{k-1})_{j} \neq 0$ and $(z^k - z^{k-1})_{j'} = 0$ for $j' \neq j$.
A path is called \emph{oriented},
if differences of successive elements conform to the overall difference $x-y$,
that is, $\sigma(z^k - z^{k-1}) \le \sigma(x-y)$ for all $k \in [N]$.
A set $X$ is called \emph{connected by rectangular, oriented paths}
if for all $x,y \in X$ there exists a rectangular, oriented path from $y$ to $x$. 
For example, an open convex set is connected by rectangular, oriented paths.

\begin{lemma} \label{lem:Dconnected} 
Let $X\subseteq \R^n $ be connected by rectangular, oriented paths, 
$G\colon X\to \R^r$, and $D$ an $r \times n$ matrix of intervals.
The following statements are equivalent:
\begin{enumerate}[(i)]
\item
$G$ is $\mmQ(D)$-\rel.
\item
$G$ is component-wise $D$-\rel.
\end{enumerate}
\end{lemma}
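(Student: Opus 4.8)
The plan is to prove Lemma~\ref{lem:Dconnected} by mirroring the structure of the proof of Lemma~\ref{lem:Drectangular}, since the two statements are parallel and (ii)~$\Rightarrow$~(i) is the only direction requiring real work.

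The direction (i)~$\Rightarrow$~(ii) follows immediately from Lemma~\ref{lem:D}, which holds for an arbitrary $r \times n$ matrix $D$ of non-empty subsets and places no hypothesis on the domain. So I would dispose of it in one line.

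For (ii)~$\Rightarrow$~(i), assume $G$ is component-wise $D$-\rel and fix $x,y \in X$. Since $X$ is connected by rectangular, oriented paths, choose a rectangular, oriented path $(z^k)_{k=0}^N$ from $y$ to $x$. Telescoping gives
\[
G_i(x) - G_i(y) = \sum_{k=1}^N \big(G_i(z^k) - G_i(z^{k-1})\big)
\]
for each $i \in [r]$. Each successive pair $z^{k-1}, z^k$ differs only in a single component, say the $j(k)$-th; component-wise affinity then yields $G_i(z^k) - G_i(z^{k-1}) = b_{ij(k)}^{(k)} (z^k - z^{k-1})_{j(k)}$ with $b_{ij(k)}^{(k)} \in d_{ij(k)}$. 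Grouping the telescoped sum by the component index $j$, I obtain, for each $j \in [n]$, that $G_i(x) - G_i(y)$ contains a contribution $\sum_{k : j(k)=j} b_{ij(k)}^{(k)} (z^k - z^{k-1})_j$. The goal is to rewrite this as a single $b_{ij}(x_j - y_j)$ with $b_{ij} \in d_{ij}$, and here is exactly where the two hypotheses that were relaxed relative to Lemma~\ref{lem:Drectangular} enter. Because the path is \emph{oriented}, each step-difference $(z^k - z^{k-1})_j$ has sign $\le \sigma(x_j - y_j)$, so all increments in the $j$-th coordinate carry the same sign as $x_j - y_j$ (or vanish); hence these increments are non-negative multiples of $x_j - y_j$ summing to $x_j - y_j$. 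Thus the relevant $b_{ij}$ is a \emph{convex combination} of the values $b_{ij(k)}^{(k)} \in d_{ij}$, and because $d_{ij}$ is an \emph{interval} (convex), this convex combination again lies in $d_{ij}$. Setting $B = (b_{ij})$ gives $G(x) - G(y) = B(x-y)$ with $B \in \mmQ(D)$, as required. The edge case $x_j = y_j$ forces every contributing increment to be zero (by orientation), so $b_{ij}$ may be chosen arbitrarily in $d_{ij}$, exactly as in the previous lemma.

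The main obstacle is the convexity argument at the end: unlike the rectangular case, where a single canonical monotone path visits each coordinate exactly once and no averaging is needed, a general path may traverse a given coordinate in several steps, so I must argue that the weighted average of the slopes stays in $d_{ij}$. This is precisely why orientation (to guarantee non-negative weights summing to one, i.e.\ a genuine convex combination rather than an arbitrary affine combination) and the interval assumption on $D$ (to guarantee closure under convex combinations) are both indispensable. I would make sure the weights $(z^k - z^{k-1})_j / (x_j - y_j)$ are shown to be non-negative and to sum to $1$ before invoking convexity of the interval.
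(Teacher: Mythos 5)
Your proposal is correct and follows essentially the same route as the paper's proof: telescoping along a rectangular, oriented path, grouping steps by the coordinate they change, and using orientation to obtain non-negative weights summing to one so that the resulting entry $b_{ij}$ is a convex combination of slopes lying in the interval $d_{ij}$. The paper formalizes the grouping via the index sets $C_j$ and the weights $\xi^j_k = (z^k_j - z^{k-1}_j)/(x_j - y_j)$, but the argument is the same, including the treatment of the edge case $x_j = y_j$.
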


\begin{proof}
(i) $\Rightarrow$ (ii): By Lemma~\ref{lem:D}.

(ii) $\Rightarrow$ (i):
Let $x,y \in X$.
By assumption,
there exists a rectangular, oriented path $(z^k)_{k=0}^N$ from $y$ to $x$.
We group the steps $[N]$ into equivalence classes $C_1, \ldots, C_n$:
\[
C_j = \{ k \in [N] \mid (z^k - z^{k-1})_{j} \neq 0 \} \quad \text{for } j \in [n],
\]
in particular, $C_j = \emptyset$ if $x_j = y_j$, because the path is oriented.
Clearly, 
\[
\sum_{k \in C_j} (z^k - z^{k-1})_{j} = x_j - y_j.
\]
Now, assume that $G$ is component-wise $D$-\rel. For $x_j \neq y_j$, we introduce
\[
\xi^j_k = (z^k_j - z^{k-1}_j) / (x_j - y_j) \ge 0 \quad \text{for } k \in C_j 
\]
such that $\sum_{k \in C_j} \xi^j_k = 1$.
For $i \in [r]$, we obtain
\begin{align*}
G_i(x) - G_i(y) &= \sum_{k \in [N]} G_i(z^k) - G_i(z^{k-1}) \\
&= \sum_{j \in [n]} \sum_{k \in C_j} G_i(z^k) - G_i(z^{k-1}) \\
&= \sum_{j \in [n]} \sum_{k \in C_j} b_{ij}^k (z^k_j - z^{k-1}_j) \quad \text{with } b_{ij}^k \in d_{ij} \text{ for } k \in C_j \\
&= \sum_{j \in [n]} \sum_{k \in C_j} b_{ij}^k \, \xi^j_k (x_j - y_j) \\
&= \sum_{j \in [n]} A_{ij} (x_j - y_j) \quad \text{with } A_{ij} \in d_{ij},
\end{align*}
where $A_{ij} = \sum_{k \in C_j} b_{ij}^k \, \xi^j_k \in d_{ij}$ since the interval $d_{ij}$ is convex.
Note that, if $x_j=y_j$, then $A_{ij}$ can be chosen arbitrarily in $d_{ij}$.
Hence, $G(x)-G(y)=A(x-y)$ with $A=(A_{ij}) \in \mmQ(D)$.
\end{proof}

For fixed $X \subseteq \R^n$, we denote the set of component-wise $D$-\rel maps by
\[
\mmM(D) \subseteq \map{X}{\R^r} .
\]
Under the assumptions on the domain $X$ and the matrix $D$ of Lemmas~\ref{lem:Drectangular} or \ref{lem:Dconnected},
$\mmM(D)$ is $\mmQ(D)$-\rel. 
Further, 
$\mmQ(D) \subseteq \mmM(D)$, where elements of $\mmQ(D)$ have their domains restricted from $\R^n$ to $X$.
Hence,
we can apply Proposition~\ref{pro:maincoset}.

\begin{theorem} \label{thm:D}
Let $X,S \subseteq \R^n$ with $S$ a linear subspace, $D$ be an $r \times n$ matrix of non-empty subsets of~$\R$,
and $\mmM(D) \subseteq \map{X}{\R^r}$ be the set of component-wise $D$-\rel maps.
If $X$ is rectangular or $X$ is connected by rectangular, oriented paths and $D$ is a matrix of intervals, 
then
\begin{equation*} 
\text{$\mmM(D)$ is $\mmQ(D)$-\rel}, \, 
\mmQ(D) \subseteq \mmM(D), \,
\del_S \mmM(D) = \mmQ(D)(dX \cap S) ,
\end{equation*}
and the following statements are equivalent:
\begin{enumerate}[(i)]
\item $\mmM(D)$ is \injS.
\item $0 \notin \mmQ(D)(dX \cap S)$.
\end{enumerate}
\end{theorem}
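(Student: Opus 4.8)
The plan is to specialize the general framework of Proposition~\ref{pro:maincoset} to the case $\mmG = \mmM(D)$ and $\mmB = \mmQ(D)$, so that the whole statement reduces to verifying the two hypotheses ``$\mmM(D)$ is $\mmQ(D)$-\rel'' and ``$\mmQ(D) \subseteq \mmM(D)$''.

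First I would check that $\mmM(D)$ is $\mmQ(D)$-\rel. By definition $\mmM(D)$ consists exactly of the component-wise $D$-\rel maps, so it suffices to show that each such map is $\mmQ(D)$-\rel. This is precisely the implication (ii)~$\Rightarrow$~(i) of the two affinity equivalences: under the first hypothesis ($X$ rectangular) I would invoke Lemma~\ref{lem:Drectangular}, and under the second hypothesis ($X$ connected by rectangular, oriented paths and $D$ a matrix of intervals) I would invoke Lemma~\ref{lem:Dconnected}. In either case the lemma applies to every map in $\mmM(D)$, and hence to the set $\mmM(D)$ under the convention that a set has a property when all its members do.

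Next I would verify $\mmQ(D) \subseteq \mmM(D)$. A matrix $B = (b_{ij}) \in \mmQ(D)$, restricted to $X$ and viewed as a linear map, satisfies $B_i(x) - B_i(y) = b_{ij}(x_j - y_j)$ with $b_{ij} \in d_{ij}$ whenever $x,y$ differ only in the $j$-th component; thus $B$ is component-wise $D$-\rel and lies in $\mmM(D)$. With both hypotheses now available, Lemma~\ref{lem:aff} yields the equality $\del_S \mmM(D) = \mmQ(D)(dX \cap S)$ claimed in the theorem, and Proposition~\ref{pro:maincoset} then delivers the equivalence of (i) and (ii).

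The only step carrying genuine content is the first one, and there the work has already been done: the equivalence of the two notions of affinity is exactly where the structural hypotheses on $X$ and $D$ are consumed (a telescoping argument along a rectangular path in Lemma~\ref{lem:Drectangular}, and a convex-combination argument exploiting the interval structure of $D$ in Lemma~\ref{lem:Dconnected}). I therefore expect no real obstacle in assembling the theorem; the proof is a mechanical specialization of the framework, with the case split on the domain/matrix hypotheses being the only point requiring care.
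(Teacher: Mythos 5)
Your proposal matches the paper's own (implicit) argument exactly: the paper assembles the theorem by invoking Lemmas~\ref{lem:Drectangular} and \ref{lem:Dconnected} for the affinity of $\mmM(D)$, noting that every matrix in $\mmQ(D)$ is component-wise $D$-\rel so that $\mmQ(D) \subseteq \mmM(D)$, and then applying Lemma~\ref{lem:aff} and Proposition~\ref{pro:maincoset}. The proof is correct and identical in structure to the paper's.
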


% ========= ========= ========= ========= ========= ========= ========= =========

\subsection{Generalized monomial maps on the positive orthant} \label{sec:monomial}

Let $X=\R^n_>$ (and hence $dX=\R^n\setminus \{0\}$).
For $B = (b_1,\ldots,b_r)^T \in \R^{r \times n}$,
we consider the \emph{generalized monomial map} $\mu_B \colon \R_>^n \to \R_>^r, \, x \mapsto x^B$, given by
\[
(x^B)_j = x^{b_{j}}
= x_1^{b_{j1}} \cdots x_n^{b_{jn}} \quad \text{for } j \in [r] ,
\]
and the sets
\begin{align*}
\mmq(B) &=\{ \diag(\k)B\diag(\lambda) \mid \k\in \R_>^r, \, \lambda\in \R_>^n \}\subseteq \R^{r\times n}, \\
\mmm(B) &= \{ x \mapsto \k \had x^B \mid \k \in \R_>^r \} \subseteq \map{\R^n_>}{\R^r_>}.
\end{align*}

Note that the individual map $\mu_B$ is \inj if and only if the set of maps $\mmm(B)$ is \inj.
\begin{proposition} \label{pro:monomial}
Let $B \in \R^{r \times n}$ and $S\subseteq \R^n$ be a linear subspace. Then,
\begin{enumerate}[(a)]
\item $\mu_B$ is $\mmq(B)$-\rel,
\item $\sigma(\del_S \, \mu_B) = \sigma(B(\Sigma(S^*)))$,
\item $\mmm(B)$ is $\mmq(B)$-\rel ,
\item $\del_S \, \mmm(B) = \mmq(B)(S^*) $.
\end{enumerate}
\end{proposition}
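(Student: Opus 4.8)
The key device is that monomials become \emph{linear in logarithmic coordinates}: writing $\ln x = (\ln x_1,\dots,\ln x_n)$, one has $(\mu_B(x))_j = x^{b_j} = \exp(\langle b_j,\ln x\rangle)$, so the whole proposition is really a statement about the linear map $p\mapsto Bp$ conjugated by the coordinate changes $\exp$ and $\ln$. I would prove the four parts in the order (a), (c), (b), (d).

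For (a) the plan is a \emph{double} application of the mean value theorem. Fix $x,y\in\R^n_>$. Applying the MVT to $\exp$ in each coordinate of $\mu_B$ gives $x^{b_j}-y^{b_j} = \k_j\,\langle b_j,\ln x-\ln y\rangle$ with $\k_j=\exp(\theta_j)>0$, $\theta_j$ between $\langle b_j,\ln x\rangle$ and $\langle b_j,\ln y\rangle$; applying the MVT to $\ln$ in each coordinate gives $\ln x_i-\ln y_i=\lambda_i\,(x_i-y_i)$ with $\lambda_i>0$. Substituting yields $x^{b_j}-y^{b_j}=\sum_i \k_j b_{ji}\lambda_i\,(x_i-y_i)$, i.e. $\mu_B(x)-\mu_B(y)=\diag(\k)\,B\,\diag(\lambda)\,(x-y)$. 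The crucial point that lands this matrix in $\mmq(B)$ is that the row factor $\k_j$ depends only on $j$ and the column factor $\lambda_i$ only on $i$, so the two diagonal scalings separate exactly as in the definition of $\mmq(B)$. Part (c) is then immediate: for $G=\k'\had\mu_B$ one gets $G(x)-G(y)=\diag(\k'\had\k)\,B\,\diag(\lambda)\,(x-y)$, and $\k'\had\k\in\R^r_>$ keeps this in $\mmq(B)$.

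The heart of the argument is (b), and the step I expect to be the main obstacle. Since $\exp$ is increasing, the sign identity $\sigma(x^B-y^B)=\sigma\big(B(\ln x-\ln y)\big)$ holds coordinate-wise, so the task reduces to understanding $P=\{\ln x-\ln y\mid x,y\in\R^n_>,\ x-y\in S^*\}$, and I would show $P=\Sigma(S^*)$. The inclusion $P\subseteq\Sigma(S^*)$ is just the log-MVT step: $\ln x-\ln y=\lambda\had(x-y)$ with $\lambda\in\R^n_>$ and $x-y\in S^*$. The reverse inclusion is the delicate one, because ``$x-y\in S$'' is a linear constraint whereas the log-difference lives in exponential coordinates. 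The trick I would use is to prescribe the log-difference and \emph{solve for the base point}: given $w\in\Sigma(S^*)$, pick $z\in S^*$ with $\sigma(z)=\sigma(w)$, set $x_i=y_i\,e^{w_i}$ so that $\ln x-\ln y=w$ automatically, and note $x-y=y\had(e^{w}-\mathbf 1)$. Since $e^{w}-\mathbf 1$ and $z$ share the sign vector $\sigma(w)$, one can choose $y\in\R^n_>$ with $y\had(e^{w}-\mathbf 1)=z$ (take $y_i=z_i/(e^{w_i}-1)$ where $w_i\neq0$ and $y_i=1$ otherwise); this $y$ forces $x-y=z\in S^*$. Hence $w\in P$, giving $\Sigma(S^*)\subseteq P$ and therefore $\sigma(\del_S\,\mu_B)=\sigma(BP)=\sigma(B(\Sigma(S^*)))$.

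Finally, for (d) I would avoid a direct element-by-element construction and instead combine (b) and (c) with a scaling-invariance observation. From (c) and Lemma~\ref{lem:aff}, using $X=\R^n_>$ so that $dX\cap S=S^*$, I obtain $\del_S\,\mmm(B)\subseteq\mmq(B)(S^*)$. For equality I note that both sets are invariant under the componentwise action of $\R^r_>$: an element of $\del_S\,\mmm(B)$ has the form $\k'\had(x^B-y^B)$, and an element of $\mmq(B)(S^*)$ has the form $\diag(\k)B\diag(\lambda)z=\k\had Bw$ with $w=\lambda\had z\in\Sigma(S^*)$, and multiplying either by any $\rho\in\R^r_>$ stays in the same set. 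A set invariant under this action equals its own $\Sigma$-closure and is thus determined by its sign vectors, so it suffices to match sign vectors: $\sigma(\del_S\,\mmm(B))=\sigma(\del_S\,\mu_B)$ (the factor $\k'$ preserves signs) and $\sigma(\mmq(B)(S^*))=\sigma(B(\Sigma(S^*)))$, and these agree by (b). Hence $\del_S\,\mmm(B)=\mmq(B)(S^*)$. In this scheme the only place that requires genuine work is (b); once the logarithmic linearization and the sign-vector reduction are in place, (a), (c), and (d) are bookkeeping.
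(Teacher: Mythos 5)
Your proposal is correct, and for parts (a), (c), and (d) it follows essentially the same route as the paper: the factorization $\mu_B(x)-\mu_B(y)=\diag(\k)B\diag(\lambda)(x-y)$ with the row factor depending only on the row and the column factor only on the column (the paper gets $\k$ and $\lambda$ directly from strict monotonicity of $\ln$ and its inverse rather than invoking the mean value theorem, but this is the same observation), and then the reduction of (d) to sign vectors via the invariance of both $\del_S\,\mmm(B)$ and $\mmq(B)(S^*)$ under the componentwise action of $\R^r_>$, which is exactly the paper's chain $\del_S\,\mmm(B)=\Sigma(\del_S\,\mu_B)=\Sigma(B(\Sigma(S^*)))=\mmq(B)(S^*)$. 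The genuine difference is part (b): the paper simply cites Lemma~2.4 of \cite{MFR2016}, whereas you supply a self-contained proof. Your argument there is valid: the inclusion $\{\ln x-\ln y\mid x-y\in S^*\}\subseteq\Sigma(S^*)$ is the componentwise monotonicity step, and for the reverse inclusion your device of prescribing the log-difference $w$ and solving for the base point ($y_i=z_i/(\e^{w_i}-1)$ when $w_i\neq 0$, which is positive because $z_i$ and $\e^{w_i}-1$ share the sign $\sigma(w_i)$, and $y_i=1$ otherwise) correctly forces $x-y=z\in S^*$ while $\ln x-\ln y=w$. So your write-up is actually more complete than the paper's on this point, at the cost of reproving a cited result.
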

\begin{proof}
(a) For $x\in \R^n_>$, let $\ln x=(\ln x_1,\dots,\ln x_n)^T$. Since the logarithm is a strictly increasing function, for every $x,y\in \R^n_>$, there exist $\k\in \R^r_>$ and $\lambda\in \R^n_>$ such that
\[
x^B - y^B = \k \had (\ln x^B - \ln y^B) = \k \had (B(\ln x - \ln y)) = \k \had B( \lambda \had (x-y)).
\]
(b) By Lemma 2.4 in~\cite{MFR2016}. \\
(c) By (a). \\
(d) Using (b), $\del_S \, \mmm(B) = \Sigma(\del_S \, \mu_B) = \Sigma(B(\Sigma(S^*))) = \mmq(B)(S^*)$.
\end{proof}
By (a), $\mu_B$ is $\mmq(B)$-\rel, but $\del_S \, \mu_B \neq B(\Sigma(S^*))$ and $\mmq(B) \not\subseteq \{ \mu_B \}$.
Hence, Lemma~\ref{lem:frame} and Proposition~\ref{pro:main} do not apply.
By (c), also $\mmm(B)$ is $\mmq(B)$-\rel, but $\mmq(B) \not\subseteq \mmm(B)$. Still, (d) holds, and Lemma~\ref{lem:frame} applies. We obtain:
\[
\mmm(B) \text{ is \injS }
\quad \Leftrightarrow \quad
0 \notin \mmq(B)(S^*) .
\]
By using $\mmq(B)(S^*) = \Sigma(B(\Sigma(S^*)))$ and equivalence~\eqref{eq:sigmaSigma}, we further obtain:
\[
0 \notin \mmq(B)(S^*)
\quad \Leftrightarrow \quad
0 \notin B(\Sigma(S^*))
\quad \Leftrightarrow \quad
\sigma(\ker(B)) \cap \sigma(S^*) = \emptyset .
\]
Hence, injectivity of the individual map $\mu_B$ and the set of maps $\mmm(B)$ is characterized by the sign condition obtained in \cite[Proposition~2.5]{MFR2016}.

\subsubsection{Monomial and polynomial maps} \label{sec:polynom}

Motivated by applications, we additionally consider a matrix $A \in \R^{m \times r}$ and the resulting set $A \circ \mmm(B)$ of generalized polynomial maps on the positive orthant.
By Proposition~\ref{pro:framecomp} and equivalence~\eqref{eq:ker_AB_S}, we obtain
\begin{equation} \label{eq:mon_poly}
A \circ m(B) \text{ is \injS } 
\quad \Leftrightarrow \quad
\ker(A) \cap \mmq(B)(S^*) = \emptyset .
\end{equation}
By using $\mmq(B)(S^*) = \Sigma(B(\Sigma(S^*)))$ and equivalence~\eqref{eq:sigmaSigma},
the latter condition is equivalent to the sign condition
\[
\sigma(\ker(A)) \cap \sigma(B(\Sigma(S^*))) = \emptyset ,
\]
obtained in \cite[Theorem~1.4]{MFR2016}.
In the case of unrestricted injectivity, $S=\R^n$,
it is further equivalent to the sign condition
\[
\sigma(\ker(A)) \cap \sigma(\im (B)) = \{ 0 \} ,
\]
obtained in \cite[Theorem~3.6]{MR2012}.

% ========= ========= ========= ========= ========= ========= ========= =========

\subsection{Monotonic maps} \label{sec:monoton}

The class of generalized monomial maps, studied in the previous section, is included in the corresponding class of monotonic maps.
For a matrix $B \in \R^{r \times n}$,
we obtain the {\em sign pattern matrix} $\sigma(B) \in \{-, 0,+\}^{r \times n}$ by applying the sign function entry-wise.
Conversely, for $W \in \{-,0,+\}^{r \times n}$, we introduce the {\em qualitative class}
\[
\mmQ(W) = \{ B \in \R^{r \times n} \mid \sigma(B)=W \}
\]
of matrices with sign pattern $W$.
In order to define (non-strict) monotonicity, we introduce the set of all possible sign combinations
\begin{equation*} 
\mmS = \big\{ \{0\}, \{-\}, \{+\}, \{-,0\}, \{0,+\}, \{-,+\}, \{-,0,+\} \big\} .
\end{equation*}
For notational simplicity, we identify a matrix $\mmW \in \mmS^{r \times n}$ 
with the set of sign patterns
\[
\{ W \in \{-,0,+\}^{r \times n} \mid W_{ij} \in \mmW_{ij} \} .
\]
For $\mmW \in \mmS^{r \times n}$, we introduce the {\em qualitative class}
\[
\mmQ(\mmW) = \{ B \in \R^{r \times n} \mid \sigma(B) \in \mmW \}
\]
and note that 
\begin{equation} \label{eq:qualitative_class_union}
\mmQ(\mmW)= \bigcup_{W \in \mmW} \mmQ(W).
\end{equation}

\begin{definition}
Let $X \subseteq \R^n$, $G \colon X \to \R^r$, and $\mmW \subseteq \mmS^{r \times n}$. 
The map $G$ is called \emph{$\mmW$-monotonic}, if
\[
\sigma(G_i(x)-G_i(y)) \in \mmW_{ij}
\]
for all $x,y\in X$ that differ only in the $j$-th component
(for $i \in [r]$ and $j \in [n]$).
\end{definition}
If $G$ is $\mmW$-monotonic, then 
$G_i$ is strictly decreasing in $x_j$ if $\mmW_{ij}=\{-\}$,
decreasing if $\mmW_{ij}=\{-,0\}$,
constant if $\mmW_{ij}=\{0\}$,
etc.

For fixed $X \subseteq \R^n$, we denote the set of all $\mmW$-monotonic maps by
\[
\mmM(\mmW) \colon X \to \R^r .
\]
 Now, note that $\mmQ(\mmW)=\mmQ(D_\mmW)$
with the $r \times n$ matrix $D_\mmW$ given by 
\[
(D_\mmW)_{ij} =
\begin{cases} 
0 & \textrm{if } \mmW_{ij}=\{0\}, \\ 
(-\infty,0) & \textrm{if }  \mmW_{ij}=\{-\}, \\
(0,+\infty)  & \textrm{if } \mmW_{ij}=\{+\}, \\
(-\infty,0] & \textrm{if } \mmW_{ij}=\{-,0\}, \\
[0,+\infty) & \textrm{if } \mmW_{ij}=\{0,+\}, \\ 
(-\infty,0)\cup (0,+\infty) & \textrm{if } \mmW_{ij}=\{-,+\}, \\ 
(-\infty,+\infty) & \textrm{if }  \mmW_{ij}=\{-,0,+\} .
\end{cases} 
\] 
In fact,
\[
\mmM(\mmW) = \mmM(D_\mmW),
\]
that is, the set of $\mmW$-monotonic maps agrees with the set of component-wise $D_\mmW$-\rel maps,
 and Theorem~\ref{thm:D} applies (for a rectangular domain~$X$ and a linear subspace $S$). We obtain:
\[
\text{$\mmM(\mmW)$ is $\mmQ(\mmW)$-\rel}, \, 
\mmQ(\mmW) \subseteq \mmM(\mmW), \,
\del_S \mmM(\mmW) = \mmQ(\mmW)(dX \cap S) ,
\]
and
\begin{equation}
\label{eq:MW}
\mmM(\mmW) \text{ is \injS }
\quad \Leftrightarrow \quad
0 \notin \mmQ(\mmW) (dX \cap S) .
\end{equation}
By Equation~\eqref{eq:qualitative_class_union}, 
$\mmM(\mmW)$ is \inj (on cosets of a linear subspace $S$) if and only if $\mmM(W)$ is \injS for all $W \in \mmW$.

\subsubsection{Monotonic and monomial/polynomial maps} \label{sec:monoton_monom}

Injectivity of sets of monotonic maps is closely related to injectivity of sets of generalized monomial maps. Let $X=\R^n_>$ and $\mmW \in \mmS^{r \times n}$.
We introduce the set of $\mmW$-monomial maps,
\[
\mmm(\mmW) = \bigcup_{B \in \mmQ(\mmW)} m(B).
\]
Clearly, $\mmm(\mmW) \subset \mmM(\mmW)$. 

\begin{proposition} \label{pro:w_monomial}
Let $\mmW\in \mmS^{r \times n}$ and $S\subseteq \R^n$ a linear subspace. Then,
\begin{enumerate}[(a)]
\item $\mmm(\mmW)$ is $\mmQ(\mmW)$-\rel,
\item $\del_S \, \mmm(\mmW) = \mmQ(\mmW)(S^*)$.
\end{enumerate}
\end{proposition}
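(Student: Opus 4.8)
The plan is to derive both statements by reducing $\mmm(\mmW)$ to a union of the monomial sets $\mmm(B)$ already handled in Proposition~\ref{pro:monomial}, exploiting the definition $\mmm(\mmW) = \bigcup_{B \in \mmQ(\mmW)} \mmm(B)$. For part~(a), I would argue that affinity is inherited under unions: since each $\mmm(B)$ is $\mmq(B)$-\rel by Proposition~\ref{pro:monomial}(c), and since $\mmq(B) = \{\diag(\k) B \diag(\lambda) \mid \k \in \R_>^r, \lambda \in \R_>^n\}$ scales the entries of $B$ only by positive factors, the sign pattern is preserved, so $\mmq(B) \subseteq \mmQ(\sigma(B)) \subseteq \mmQ(\mmW)$ whenever $\sigma(B) \in \mmW$. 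Thus every map in $\mmm(\mmW)$ lies in some $\mmm(B)$ that is $\mmq(B)$-\rel with $\mmq(B) \subseteq \mmQ(\mmW)$, and hence each such map is also $\mmQ(\mmW)$-\rel; taking the union over all $B$ gives that $\mmm(\mmW)$ is $\mmQ(\mmW)$-\rel.

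For part~(b), the key identity to establish is $\bigcup_{B \in \mmQ(\mmW)} \mmq(B)(S^*) = \mmQ(\mmW)(S^*)$. I would prove the two inclusions separately. The inclusion $\subseteq$ follows from $\mmq(B) \subseteq \mmQ(\mmW)$ for each $B \in \mmQ(\mmW)$, as noted above. For $\supseteq$, given any $C \in \mmQ(\mmW)$ I must exhibit a $B \in \mmQ(\mmW)$ with $C \in \mmq(B)$; the natural choice is $B = C$ itself, using $C = \diag(\mathbf{1}) C \diag(\mathbf{1}) \in \mmq(C)$, so that $C(S^*) \subseteq \mmq(C)(S^*)$. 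Combining this identity with Proposition~\ref{pro:monomial}(d) applied to each $B$, namely $\del_S\, \mmm(B) = \mmq(B)(S^*)$, and the fact that $\del_S$ commutes with unions (since $\del_S \mmm(\mmW) = \bigcup_{B} \del_S \mmm(B)$ by definition of $\del_S$ for a set of maps), yields
\[
\del_S\, \mmm(\mmW) = \bigcup_{B \in \mmQ(\mmW)} \del_S\, \mmm(B) = \bigcup_{B \in \mmQ(\mmW)} \mmq(B)(S^*) = \mmQ(\mmW)(S^*).
\]

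The step I expect to require the most care is verifying the sign-pattern containment $\mmq(B) \subseteq \mmQ(\mmW)$, since this is what glues the fixed-exponent monomial theory to the qualitative class $\mmQ(\mmW)$. The essential observation is that positive diagonal scaling acts entrywise as $(\diag(\k) B \diag(\lambda))_{ij} = \k_i\, b_{ij}\, \lambda_j$ with $\k_i, \lambda_j > 0$, so $\sigma(\k_i b_{ij} \lambda_j) = \sigma(b_{ij})$ and hence $\sigma(\diag(\k) B \diag(\lambda)) = \sigma(B)$; membership $\sigma(B) \in \mmW$ is therefore stable under all of $\mmq(B)$. Everything else is a matter of bookkeeping with unions, and no genuinely new inequality or convexity argument beyond what Proposition~\ref{pro:monomial} already supplies is needed.
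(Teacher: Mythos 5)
Your proposal is correct and follows essentially the same route as the paper: both reduce to Proposition~\ref{pro:monomial}(c),(d) via the union decomposition $\mmm(\mmW) = \bigcup_{B \in \mmQ(\mmW)} \mmm(B)$ and the identity $\bigcup_{B \in \mmQ(\mmW)} \mmq(B) = \mmQ(\mmW)$, which the paper justifies by the same two observations you make ($\{B\} \subseteq \mmq(B)$ and $\mmq(B) \subseteq \mmQ(\mmW)$ since positive diagonal scaling preserves sign patterns). Your write-up merely spells out these containments in more detail than the paper does.
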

\begin{proof}
Given $B\in \mmQ(\mmW)$, note that $\{B\} \subseteq \mmq(B) \subseteq \mmQ(\mmW)$ and hence 
\[
\bigcup_{B \in \mmQ(\mmW)} \mmq(B) = \mmQ(\mmW).
\]
(a) By Proposition~\ref{pro:monomial}(c), $m(B)$ is $q(B)$-\rel. 
Hence, $\mmm(\mmW)$ is $ \bigcup_{B \in \mmQ(\mmW)} q(B)$-\rel, that is, $\mmQ(\mmW)$-\rel.

(b) By Proposition~\ref{pro:monomial}(d),
\[
\del_S \, \mmm(\mmW) =  \bigcup_{B \in \mmQ(\mmW)} \del_S \, \mmm(B)
=
\bigcup_{B \in \mmQ(\mmW)} \mmq(B)(S^*) = \mmQ(\mmW)(S^*).
\]
\end{proof}

 By Propositions~\ref{pro:maincoset} and~\ref{pro:w_monomial}(b), we obtain:
\[
\mmm(\mmW) \text{ is \injS }
\quad \Leftrightarrow \quad
0 \notin \mmQ(\mmW) (S^*) .
\]
Hence,
for the rectangular domain $X = \R^n_>$, 
injectivity of $\mmW$-monotonic maps and 
injectivity of $\mmW$-monomial maps (with exponent matrix in the qualitative class of $\mmW$) are equivalent.

Motivated by applications,
we additionally consider a matrix $A \in \R^{m \times r}$ and the resulting sets of composed maps.
By Proposition~\ref{pro:framecomp},
the equation $\del_S \, \mmM(\mmW) = \del_S \, \mmm(\mmW) = \mmQ(\mmW)(S^*)$ shown above, equivalence~\eqref{eq:ker_AB_S}, and Equation~\eqref{eq:qualitative_class_union}, 
we obtain: 
\begin{proposition} 
Let  $\mmW\in \mmS^{r \times n}$,
$\mmM(\mmW) \subseteq \map{\R^n_>}{\R^r}$ be the set of $\mmW$-monotonic maps,
and $\mmm(\mmW) \subseteq \map{\R^n_>}{\R^r}$ be the set of $\mmW$-monomial maps.
Further, let $S \subseteq \R^n$ be a linear subspace
and $A \in \R^{m \times r}$.
The following statements are equivalent:
\begin{enumerate}[(i)]
\item $A \circ \mmM(\mmW)$ is \injS.
\item $A \circ \mmM(W)$ is \injS for all $W\in \{-,0,+\}^{r\times n}$ with $W\subseteq \mmW$.
\item $A \circ m(\mmW)$ is \injS.
\item 
$\ker(A) \cap \mmQ(\mmW)(S^*) = \emptyset$. 
\end{enumerate}
\end{proposition}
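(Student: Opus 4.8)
The plan is to reduce all four statements to condition (iv) through the composition framework, exploiting that the domain $X = \R^n_>$ is rectangular with non-empty interior, so that $dX \cap S = S^*$. First I would recall the identity
\[
\del_S \mmM(\mmW) = \del_S \mmm(\mmW) = \mmQ(\mmW)(S^*)
\]
established just above (from Theorem~\ref{thm:D} applied to $\mmM(\mmW) = \mmM(D_\mmW)$, and from Proposition~\ref{pro:w_monomial}(b)). This places both $\mmM(\mmW)$ and $\mmm(\mmW)$ in the hypothesis of Proposition~\ref{pro:framecomp} with the common set of matrices $\mmB = \mmQ(\mmW)$ and $T = dX \cap S = S^*$.

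Applying Proposition~\ref{pro:framecomp} to $\mmG = \mmM(\mmW)$ (respectively $\mmG = \mmm(\mmW)$) then yields that statement (i) (respectively (iii)) is equivalent to $0 \notin A\,\mmQ(\mmW)(dX \cap S)$. Since $X = \R^n_>$ has non-empty interior, equivalence~\eqref{eq:ker_AB_S} rewrites this as $\ker(A) \cap \mmQ(\mmW)(S^*) = \emptyset$, which is exactly (iv). This establishes (i) $\Leftrightarrow$ (iv) and (iii) $\Leftrightarrow$ (iv) in one stroke.

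It remains to incorporate (ii). Here I would view each sign pattern $W \subseteq \mmW$ as an element of $\mmS^{r \times n}$ with singleton entries, so that the already-proven equivalence (i) $\Leftrightarrow$ (iv), specialized to this singleton-valued $\mmW$, applies verbatim to $\mmM(W)$ and gives that $A \circ \mmM(W)$ is \injS if and only if $\ker(A) \cap \mmQ(W)(S^*) = \emptyset$. Statement (ii) is precisely the conjunction of these conditions over all $W \subseteq \mmW$, namely
\[
\ker(A) \cap \mmQ(W)(S^*) = \emptyset \quad \text{for all } W \subseteq \mmW .
\]
Using Equation~\eqref{eq:qualitative_class_union} in the form $\mmQ(\mmW)(S^*) = \bigcup_{W \subseteq \mmW} \mmQ(W)(S^*)$ and distributing $\ker(A) \cap (\cdot)$ over the union, this conjunction collapses to $\ker(A) \cap \mmQ(\mmW)(S^*) = \emptyset$, which is again (iv). Hence (ii) $\Leftrightarrow$ (iv), closing the chain of equivalences.

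The bulk of the argument is direct bookkeeping layered on top of Proposition~\ref{pro:framecomp} and equivalence~\eqref{eq:ker_AB_S}; the only point requiring genuine care is the treatment of (ii). There one must correctly identify a single sign pattern $W$ with a singleton-valued element of $\mmS^{r \times n}$ so that the preceding results apply to $\mmM(W)$, and then check that distributing the intersection with $\ker(A)$ across the union in Equation~\eqref{eq:qualitative_class_union} faithfully converts the whole family of disjointness conditions into the single condition (iv).
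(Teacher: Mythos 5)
Your proposal is correct and follows essentially the same route as the paper: the paper's own (very terse) proof likewise combines Proposition~\ref{pro:framecomp} with the identity $\del_S\,\mmM(\mmW)=\del_S\,\mmm(\mmW)=\mmQ(\mmW)(S^*)$, equivalence~\eqref{eq:ker_AB_S}, and Equation~\eqref{eq:qualitative_class_union} to reduce (i), (ii), and (iii) to (iv). Your explicit handling of (ii) via singleton-valued sign matrices and distributing $\ker(A)\cap(\cdot)$ over the union is exactly the intended reading of the paper's appeal to Equation~\eqref{eq:qualitative_class_union}.
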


 Cf.~\cite[Theorem~10.1]{WiufFeliu_powerlaw}, where $S=\im(A)$ and (iv) is expressed by a determinant condition (``$\mmW$ is $A$-sign-non-singular'').

\subsubsection{Monotonic maps and chemical reaction networks} \label{sec:chem}

In the study of chemical reaction networks~\cite{ShinarFeinberg2012,WiufFeliu_powerlaw},
one considers dynamical systems $\dd{x}{t} = A \, G(x)$, given a matrix $A \in \R^{n \times r}$ (a ``stoichiometric matrix''), a matrix of sign combinations $\mmW \in \mmS^{r \times n}$ (based on an ``influence specification''),  and a $\mmW$-monotonic map $G \in \mmM(\mmW) \subseteq (\R^n_\ge \to \R^r_\ge)$ (a ``weakly monotonic kinetics''). If the set of maps $A\circ \mmM(\mmW)$ is \inj on cosets of the linear subspace $S=\im(A)$, then the corresponding dynamical systems have at most one equilibrium in every coset.

The following result determines whether $A\circ \mmM(\mmW)$ is \inj on cosets of a linear subspace $S$,
that is, by Proposition~\ref{pro:framecomp} and equivalence~\eqref{eq:ker_AB_S}, whether
\[
\ker(A) \cap \mmQ(\mmW)(S^*) = \emptyset ,
\]
or negatively, whether there are $B \in \mmQ(\mmW)$, $y \in \ker(A)$, and $x \in S^*$ such that $y = B x$,
cf.~statement (i) in Proposition~\ref{pro:concordant} below.

\begin{notation}[Sign vectors continued]
The product on $\{-,0,+\}$ is defined in the obvious way. 
For $\tau, \rho \in \{-,0,+\}^n$,
we write $\tau \cdot \rho = 0$ ($\tau$ and $\rho$ are orthogonal) if
either $\tau_i \rho_i = 0$ for all $i$
or there exist $i,j$ with $\tau_i \rho_i = -$ and $\tau_j \rho_j = +$.
Equivalently,
$\tau \cdot \rho = 0$
if there exist $u,v \in \R^n$ with $\sigma(u)=\tau$, $\sigma(v)=\rho$, and $u \cdot v = 0$.
Moreover, as it is easy to see,
if $\tau \cdot \sigma(v) = 0$ for $\tau \in \{-,0,+\}^n$ and $v \in \R^n$, then there exists $u \in \R^n$ with $\sigma(u) = \tau$ and $u \cdot v = 0$.
For $w \in \mmS^n$ and $\rho \in \{-,0,+\}^n$,
we write $w \cdot \rho = 0$ if
there is $\tau \in w$ with $\tau \cdot \rho = 0$.
\end{notation}

\begin{proposition}\label{pro:concordant}
Let $\mmW=(w_{ij}) \in \mmS^{r \times n}$, $x \in \R^n$, and $y \in \R^r$.
The following statements are equivalent:
\begin{enumerate}[(i)]
\item There exists $B\in \mmQ(\mmW)$ such that $y=Bx$.
\item
For all $i \in [r]$, \\
if $y_i \neq 0$, then $\sigma(y)_i = s \, \sigma(x)_j$ with $s \in w_{ij}$ for some $j \in [n]$; \\
if $y_i = 0$, then $w_{i*} \cdot \sigma(x) = 0$, where $w_{i*}$ denotes the $i$-th row of  $\mmW$.
\end{enumerate}
\end{proposition}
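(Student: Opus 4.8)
The plan is to prove the equivalence componentwise in the rows of the linear system $y = Bx$. Writing $y_i = \sum_{j=1}^n b_{ij} x_j$ and recalling that $B \in \mmQ(\mmW)$ means precisely $\sigma(b_{ij}) \in w_{ij}$ for all $i,j$, I observe that the $i$-th equation only involves the $i$-th row $w_{i*}$ of $\mmW$ and is independent of the other rows. Hence it suffices to fix a single row index $i$ and characterize, for which sign patterns $(\sigma(b_{ij}))_j$ the value $\sum_j b_{ij} x_j$ can be made to equal the prescribed $y_i$. The two cases in statement (ii) ($y_i \neq 0$ and $y_i = 0$) then correspond to the two genuinely different sign-theoretic situations, and I would handle them separately.

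First I would treat the case $y_i \neq 0$. Here the plan is to argue that a sum $\sum_j b_{ij} x_j$ with $\sigma(b_{ij}) \in w_{ij}$ can realize a nonzero value $y_i$ of a given sign $\sigma(y)_i$ if and only if \emph{some} single summand $b_{ij} x_j$ can already contribute that sign, i.e.\ there is a $j$ with $x_j \neq 0$ and an admissible sign $s \in w_{ij}$ making $\sigma(s\, x_j) = \sigma(y_i)$; this is exactly the condition $\sigma(y)_i = s\,\sigma(x)_j$ with $s \in w_{ij}$. For the ``if'' direction, given such a $j$, I would choose $b_{ij}$ with $\sigma(b_{ij}) = s$ and of large enough magnitude (and set the remaining entries of the row to any admissible signs with suitably small magnitude, or zero when $0 \in w_{ij}$) so that this term dominates and the sum has the right sign and can be scaled to hit $y_i$ exactly. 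For the ``only if'' direction, if no summand can produce the sign $\sigma(y_i)$, then every nonzero term $b_{ij}x_j$ has sign in $\{0, -\sigma(y_i)\}$, so the whole sum can never equal a value of sign $\sigma(y_i)$.

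Second I would treat the case $y_i = 0$. The statement to prove is that $0$ is attainable as $\sum_j b_{ij}x_j$ with admissible signs if and only if $w_{i*} \cdot \sigma(x) = 0$, using the orthogonality notion from the Notation block. By the definition given there, $w_{i*} \cdot \sigma(x) = 0$ means there is a sign vector $\tau \in w_{i*}$ (i.e.\ $\tau_j \in w_{ij}$ for all $j$) with $\tau \cdot \sigma(x) = 0$, and the remark in the Notation block guarantees that from $\tau \cdot \sigma(x) = 0$ one gets an actual vector $u$ with $\sigma(u) = \tau$ and $u \cdot x = 0$. Taking $b_{ij} = u_j$ then gives $\sigma(b_{ij}) = \tau_j \in w_{ij}$ and $\sum_j b_{ij} x_j = u \cdot x = 0$. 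Conversely, if such $b_{i*}$ exists with row sum zero, then $\tau := \sigma(b_{i*}) \in w_{i*}$ satisfies $\tau \cdot \sigma(x) = 0$ by the equivalent characterization of orthogonality (realized by $u = b_{i*}$ and $v = x$), giving $w_{i*}\cdot\sigma(x) = 0$.

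I expect the main obstacle to be the careful bookkeeping in the $y_i \neq 0$ case when some $x_j = 0$ or when $0 \in w_{ij}$, to ensure the magnitude-scaling argument genuinely produces $y_i$ exactly while keeping all sign constraints satisfied; the orthogonality case is then essentially a direct translation through the already-established equivalences in the Notation block, so the real work is packaging the per-row arguments and reassembling them into the full matrix $B$.
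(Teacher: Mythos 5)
Your proposal is correct and follows essentially the same route as the paper's proof: a row\nobreakdash-by\nobreakdash-row reduction, the observation that a nonzero sum must contain a summand of its own sign for the forward direction, and for the converse the construction of the dominant entry $b_{ij}$ together with $\pm\epsilon$ or $0$ entries elsewhere (resp.\ the realization of an orthogonal sign vector from the Notation block when $y_i=0$). The bookkeeping you flag is handled in the paper exactly as you anticipate, by choosing $\epsilon$ small enough that $|\sum_{j'\neq j} b_{ij'}x_{j'}|<|y_i|$ and then solving $y_i-\sum_{j'\neq j}b_{ij'}x_{j'}=b_{ij}x_j$ for $b_{ij}$, which automatically has sign $s$ since $x_j\neq 0$.
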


\begin{proof}
Assume (i),
that is, there exists $B \in \mmQ(\mmW)$ such that $y_i = \sum_j b_{ij} x_j$ for all $i \in [r]$.
If $y_i \neq 0$, then $\sigma(y)_i = \sign(b_{ij} x_j) = s \, \sigma(x)_j$ with $s \in w_{ij}$ for some $j \in [n]$.
If $y_i = 0$, then $\sigma(b_{i*}) \cdot \sigma(x) = 0$. Now, $\sigma(b_{i*}) \in w_{i*}$ and hence $w_{i*} \cdot \sigma(x) = 0$.

Conversely, assume (ii)
and construct $B \in \mmQ(\mmW)$ such that $y = B x$.
If $y_i \neq 0$, then $\sigma(y)_i = s \, \sigma(x)_j$ with $s \in w_{ij}$ for some $j \in [n]$.
Let $\epsilon>0$ and, for all $j' \neq j$, set $b_{ij'} = -\epsilon,$ 0, or $+\epsilon$ such that $\sign(b_{ij'}) \in w_{ij'}$.
Choose $\epsilon$ small enough such that $|\sum_{j' \neq j} b_{ij'} x_{j'}| < |y_i|$
and determine $b_{ij}$ from $y_i - \sum_{j' \neq j} b_{ij'} x_{j'} = b_{ij} x_j$, where $\sign(b_{ij}) = s \in w_{ij}$.
If $y_i = 0$, then $w_{i*} \cdot \sigma(x) = 0$
and there exists $\tau \in w_{i*}$ with $\tau \cdot \sigma(x) = 0$.
By the argument above, there exists $u \in \R^n$ with $\sigma(u) = \tau$ and $u \cdot x = 0$.
Now, set $b_{i*}= u$ and $\sigma(b_{i*}) = \tau \in w_{i*}$, by construction.
\end{proof}

In the study of chemical reaction networks, a criterion for injectivity of $A \circ \mmM(\mmW)$ on cosets of $S=\im(A)$ was obtained, for particular $\mmW\in \mmS^{r\times n}$ \cite[Proposition 9.18]{ShinarFeinberg2012}.
The condition involves statement~(ii) in Proposition~\ref{pro:concordant} above.
Networks meeting the criterion are called ``concordant'' (with respect to the influence specification). The relation between concordance and $\mmW$-monotonicity was studied in \cite[Section~12]{WiufFeliu_powerlaw}.

% ========= ========= ========= ========= ========= ========= ========= =========

\section{Maps with partial derivatives, differentiable maps, and the use of mean value theorems} \label{sec:diff}

We revisit results that guarantee injectivity of individual maps (having partial derivatives or being differentiable), by using the univariate mean-value theorem or a corresponding result for vector-valued functions.

Let $X \subseteq \R^n$ have non-empty interior and $G \colon X \to \R^r$ have partial derivatives.
We define $J_G \subseteq \R^{r \times n}$, the set of Jacobian matrices, as
\[
J_G = \left\{ \DD{G}{x} (x) \mid x \in X \right\}
\]
and write $\conv(J_G)$ for its convex hull.
 Further, we define $D_G$, an $r \times n$ matrix of non-empty subsets of $\R$, component-wise as 
\[
(D_G)_{ij}  = \left\{ \DD{G_i}{x_j}(x) \mid x \in X \right\} .
\]

Using the univariate mean value theorem,
we observe that maps $G$ that are continuous and have partial derivatives are component-wise $D_G$-\rel.
See also \cite[Theorem~5]{LagrangeDelanoueJaulin2007}.

\begin{proposition} \label{pro:partial}
Let $X \subseteq \R^n$ be convex with non-empty interior,
and let $G \colon X \to \R^r$ be continuous and have partial derivatives. 
Then $G$ is component-wise $D_G$-\rel.
\end{proposition}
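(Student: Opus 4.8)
The plan is to reduce the multivariate statement to the univariate mean value theorem, applied one coordinate at a time. Recall that being component-wise $D_G$-\rel{} only constrains $G$ along coordinate-axis-parallel segments: I must show that whenever $x,y \in X$ differ only in the $j$-th component, then for each $i \in [r]$ there exists $b_{ij} \in (D_G)_{ij}$ with $G_i(x)-G_i(y)=b_{ij}(x_j-y_j)$.

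First I would fix $i \in [r]$, $j \in [n]$, and two points $x,y \in X$ that agree in all components except the $j$-th. The convexity of $X$ guarantees that the entire segment from $y$ to $x$ lies in $X$; since these points differ only in coordinate $j$, this segment is exactly the axis-parallel segment $\{z(t)\}$ where $z(t)$ interpolates $y_j$ to $x_j$ in the $j$-th slot and is constant elsewhere. I would then define the univariate function $\varphi(t) = G_i(z(t))$ on the interval between $y_j$ and $x_j$. This $\varphi$ is continuous because $G$ is continuous and $z$ is continuous, and $\varphi'(t) = \DD{G_i}{x_j}(z(t))$ because all other coordinates are held fixed, so only the $j$-th partial derivative enters via the chain rule. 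Applying the univariate mean value theorem to $\varphi$ yields a point $z^*$ on the segment with $G_i(x)-G_i(y) = \DD{G_i}{x_j}(z^*)\,(x_j-y_j)$, and setting $b_{ij} = \DD{G_i}{x_j}(z^*)$ gives a value that lies in $(D_G)_{ij}$ by definition, since $z^* \in X$.

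The case $x_j = y_j$ (so $x=y$) is trivial and handled by choosing any value in the non-empty set $(D_G)_{ij}$. For the main case I expect the principal subtlety to be the \emph{differentiability of $\varphi$}: the hypothesis only grants partial derivatives of $G$, not full differentiability, so I cannot invoke the multivariate chain rule in general. The point to get right is that along an axis-parallel segment the chain rule degenerates to a single-variable statement — $\varphi'(t)$ is literally the difference quotient of $G_i$ in the $j$-th variable alone, which is exactly the partial derivative that is assumed to exist. Thus the derivative $\varphi'(t)$ exists at every interior $t$ precisely because $\DD{G_i}{x_j}$ exists at $z(t)$, and no stronger regularity is needed. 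Combined with continuity of $\varphi$ on the closed interval, the hypotheses of the univariate mean value theorem are met.

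The hard part, then, is not the computation but recognizing and justifying that component-wise affinity is a purely one-dimensional condition that sidesteps the failure of the multivariate mean value theorem for vector-valued maps; the univariate theorem applies coordinate-by-coordinate without any vector-valued or full-differentiability machinery. Once this observation is made, the proof closes immediately by ranging over all $i$, $j$, and all admissible $x,y$.
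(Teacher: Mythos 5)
Your proposal is correct and follows essentially the same route as the paper: fix $i,j$ and two points differing only in the $j$-th coordinate, restrict $G_i$ to the axis-parallel segment (which lies in $X$ by convexity), and apply the univariate mean value theorem, using continuity of $G$ for continuity of the restriction and existence of $\DD{G_i}{x_j}$ for its differentiability. Your explicit remark that only the partial derivative, not full differentiability, is needed along such a segment is exactly the point the paper's proof relies on implicitly.
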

\begin{proof}
Let $i\in [r]$,  $j\in [n]$, and $x,y\in X$ that differ only in the $j$-th component.
Let $X_j\subseteq \R$ denote the projection  of $X$ onto the  $j$-th component. Since $X$ is convex, $X_j$ is also convex,
and $[x_j,y_j] \subseteq X_j$.
Now, consider the univariate map
\begin{align*}
g_{ij} \colon & [x_j,y_j] \to \R, \\
& z \mapsto G_i(x_1,\dots,x_{j-1},z,x_{j+1},\dots,x_n).
\end{align*}
The mean value theorem yields
\[
G_i(x) - G_i(y) = g_{ij} (x_j) - g_{ij}(y_j) = \dd{g_{ij}}{x_j}(\xi)(x_j-y_j) = \DD{G_i}{x_j}(z) (x_j-y_j) 
\]
for some $\xi \in (x_j,y_j)$ and where $z =(x_1,\dots,x_{j-1},\xi,x_{j+1},\dots,x_n) \in X$.
Hence, $G$ is component-wise $D_G$-\rel. 
\end{proof}
Under the hypotheses of Proposition~\ref{pro:partial},
the map $G$ is component-wise $D_G$-\rel.
By Theorem~\ref{thm:D} (for a suitable domain)
and equivalence~\eqref{eq:ker_B_S},
$G$ is $\mmQ(D_G)$-\rel
and \inj if $\ker(\mmQ(D_G)) = \{0\}$.

Using a mean value theorem for vector-valued functions stated in~\cite{McLeod1965},
we find that differentiable maps $G$ are $\conv(J_G)$-\rel.

\begin{theorem}[cf.~Theorem 4 in \cite{McLeod1965}] \label{thm:mean_value}
Let $F \colon [a,b] \to \R^r$ be continuous and differentiable on $(a,b)$.
Then,
\[
F(b)-F(a) = \sum_{i=1}^{r} \lambda_i \, F'(\xi_i) (b-a) 
\]
with $\xi_i \in (a,b)$, $\lambda_i \ge 0$, and $\sum_{i=1}^{r} \lambda_i = 1$.
\end{theorem}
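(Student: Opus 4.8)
The plan is to reduce the vector-valued statement to a clever application of the ordinary (scalar) mean value theorem, exploiting the freedom to take a linear combination of the component functions. The key observation is that the conclusion asserts the existence of a point in the convex hull of the image $\{F'(\xi) \mid \xi \in (a,b)\}$ — more precisely, a convex combination of at most $r$ derivative evaluations — that reproduces the overall difference quotient $\frac{F(b)-F(a)}{b-a}$ (assuming $b \neq a$; the case $b=a$ is trivial). Geometrically, I would show that the difference quotient vector lies in the convex hull of the set of derivative vectors, and then invoke Carath\'eodory's theorem to write it as a convex combination of at most $r+1$ (and in fact $r$) such vectors.

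First I would set $v = F(b)-F(a) \in \R^r$ and consider, for each fixed direction $c \in \R^r$, the scalar function $h_c \colon [a,b] \to \R$ defined by $h_c(t) = c \cdot F(t)$. Since $F$ is continuous on $[a,b]$ and differentiable on $(a,b)$, so is $h_c$, and the scalar mean value theorem yields a point $\xi \in (a,b)$ with $c \cdot (F(b)-F(a)) = h_c(b)-h_c(a) = h_c'(\xi)(b-a) = c \cdot F'(\xi)\,(b-a)$. Dividing by $b-a$, this shows that for every direction $c$ there exists $\xi$ with $c \cdot \frac{v}{b-a} = c \cdot F'(\xi)$. The second step is to turn this family of scalar identities into the geometric claim that $\frac{v}{b-a}$ belongs to $\conv(\{F'(\xi)\mid \xi\in(a,b)\})$. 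I would argue by contradiction using a separating hyperplane: if $\frac{v}{b-a}$ were not in the (closed) convex hull of the derivative set, there would be a direction $c$ and a scalar $\alpha$ strictly separating $\frac{v}{b-a}$ from every $F'(\xi)$, i.e. $c\cdot \frac{v}{b-a} > \alpha > c\cdot F'(\xi)$ for all $\xi$, contradicting the equality just established for that particular $c$.

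Having placed $\frac{v}{b-a}$ in the convex hull, the final step is purely combinatorial: by Carath\'eodory's theorem in $\R^r$, the point is a convex combination of at most $r+1$ extreme points $F'(\xi_i)$, and one can reduce this to $r$ points because $\frac{v}{b-a}$ is itself constrained to lie on the relevant affine structure; equivalently, one notes that the mean value identities already pin down one linear functional, lowering the effective dimension by one. Writing the combination as $\frac{v}{b-a} = \sum_{i=1}^{r} \lambda_i F'(\xi_i)$ with $\lambda_i \ge 0$ and $\sum_i \lambda_i = 1$, and multiplying through by $b-a$, gives exactly the claimed formula.

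The main obstacle I anticipate is the careful sharpening of the bound from $r+1$ summands (the naive output of Carath\'eodory) down to the claimed $r$; this requires either exploiting the single scalar constraint that the mean value theorem imposes on $\frac{v}{b-a}$, or arguing that $\frac{v}{b-a}$ lies on the boundary of (rather than in the interior of) the convex hull so that it is a convex combination of at most $r$ points. A secondary technical point is ensuring the convex hull is closed (or working with its closure and then retrieving genuine interior points $\xi_i \in (a,b)$), which needs a brief compactness or limiting argument if the derivative set is not already closed.
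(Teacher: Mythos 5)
A preliminary remark: the paper does not prove this theorem at all. It is stated with the attribution ``cf.~Theorem~4 in \cite{McLeod1965}'' and used as an imported black box (only its consequence, Proposition~\ref{pro:diff}, is proved in the paper). So there is no in-paper proof to compare yours against; I can only assess your argument on its own merits.

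Your plan --- apply the scalar mean value theorem to $c\cdot F$ for every direction $c$, deduce by separation that $m:=\frac{F(b)-F(a)}{b-a}$ lies in the convex hull of $K:=F'((a,b))$, and finish with Carath\'eodory --- has two genuine gaps, and they are exactly the two points you defer to the end as ``technical''. First, the separation argument only places $m$ in the \emph{closed} convex hull $\overline{\conv(K)}$: strict separation needs a closed set, and $K$ need not be closed or bounded since $F'$ need not be continuous. This is not repaired by a ``brief compactness or limiting argument'', because the only information your proof retains about $F$ is the projection property ``for every $c$ there exists $\xi$ with $c\cdot m=c\cdot F'(\xi)$'', and that property alone does not imply $m\in\conv(K)$: in $\R^2$ the set $K=\{(1,0)\}\cup\{(t,-1)\mid t\in\R\}$ meets every line through the origin, yet $0\in\overline{\conv(K)}\setminus\conv(K)$. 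Recovering genuine points $\xi_i\in(a,b)$ therefore requires structure of derivatives that your argument discards (McLeod works with the set of difference quotients and its Darboux-type properties). Second, the reduction from the $r+1$ points of Carath\'eodory to $r$ points is asserted rather than proved: $m$ need not lie on the boundary of $\conv(K)$, the ``one scalar constraint'' you invoke does not lower the affine dimension of the ambient space, and the standard refinement (at most $r$ points suffice when the set is connected) is unavailable because the image of the derivative of a differentiable curve need not be connected --- e.g.\ $F(t)=(t^2\sin(1/t),\,t^2\cos(1/t))$ with $F(0)=0$ has $F'(0)=0$ isolated from $F'(t)$ for small $t\neq 0$. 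These two steps are where the real work of McLeod's Theorem~4 lies, so the proposal as it stands does not establish the statement.
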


\begin{proposition} \label{pro:diff}
Let $X \subseteq \R^n$ be open and convex, and let $G \colon X \to \R^r$ be differentiable.
Further, let $x,y \in X$. Then,
\[
G(x)-G(y) = B (x-y)
\]
for some $B \in \conv(J_G)$.
In particular, $B$ is the convex combination of at most $r$ Jacobian matrices on the line segment between $x$ and $y$.
Hence, $G$ is $\conv(J_G)$-\rel.
\end{proposition}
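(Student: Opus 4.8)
The plan is to reduce this multivariate statement to the vector-valued mean value theorem of Theorem~\ref{thm:mean_value} by restricting $G$ to the line segment joining $y$ and $x$. Since $X$ is open and convex and $x,y \in X$, the closed segment $\{y + t(x-y) \mid t \in [0,1]\}$ lies entirely in $X$. I would therefore introduce the auxiliary map
\[
F \colon [0,1] \to \R^r, \quad t \mapsto G\big(y + t(x-y)\big),
\]
which is well defined, continuous on $[0,1]$, and differentiable on $(0,1)$ because $G$ is differentiable on the open set $X$.

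Next I would compute $F'$ via the chain rule. Writing $J_G(z) = \DD{G}{x}(z)$ for the Jacobian of $G$ at a point $z \in X$, differentiation of the composition gives $F'(t) = J_G\big(y + t(x-y)\big)\,(x-y)$, the product of the $r \times n$ Jacobian matrix at $y + t(x-y)$ with the direction vector $x - y \in \R^n$. Applying Theorem~\ref{thm:mean_value} to $F$ with $a = 0$ and $b = 1$ (so that the factor $b-a$ equals $1$) produces scalars $\lambda_i \ge 0$ with $\sum_{i=1}^r \lambda_i = 1$ and points $\xi_i \in (0,1)$ such that $F(1) - F(0) = \sum_{i=1}^r \lambda_i F'(\xi_i)$. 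Substituting $F(1) = G(x)$, $F(0) = G(y)$, and the expression for $F'$, I would factor out the common vector $x-y$ to obtain
\[
G(x) - G(y) = \sum_{i=1}^r \lambda_i J_G(z_i)\,(x-y) = B\,(x-y), \quad B = \sum_{i=1}^r \lambda_i J_G(z_i),
\]
where $z_i = y + \xi_i(x-y)$ lie on the segment between $y$ and $x$.

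It then remains to observe that $B$ is a convex combination of at most $r$ matrices drawn from $J_G$ (the Jacobians at the points $z_i$), so $B \in \conv(J_G)$; the refinement ``convex combination of at most $r$ Jacobian matrices on the line segment'' follows directly from the $r$ summands supplied by Theorem~\ref{thm:mean_value}. Since $x,y \in X$ were arbitrary, the definition of affinity yields that $G$ is $\conv(J_G)$-\rel. I expect no serious obstacle: the only points requiring care are that the whole closed segment stays in $X$ (guaranteed by convexity) and that the chain rule applies on $(0,1)$ (guaranteed by differentiability of $G$ together with openness of $X$), after which the cited theorem does the essential work. The one mild subtlety is purely notational, namely keeping track that $F'(\xi_i)$ already absorbs the factor $x-y$, so that the scalar factor $b-a = 1$ contributes nothing and the convex weights $\lambda_i$ attach to the Jacobians rather than to the direction vector.
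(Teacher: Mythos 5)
Your proposal is correct and follows essentially the same route as the paper: restrict $G$ to the segment via $F(t)=G(y+t(x-y))$ (the paper uses the equivalent parametrization $tx+(1-t)y$), apply the chain rule to get $F'(t)=\DD{G}{x}(\cdot)(x-y)$, and invoke Theorem~\ref{thm:mean_value} on $[0,1]$ to obtain the convex combination of at most $r$ Jacobians on the segment. No gaps.
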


\begin{proof}
Let $\varphi \colon [0,1] \to X$, $t \mapsto tx+(1-t)y$ and $F=G \circ \varphi\colon [0,1] \to \R^r$.
Then, by the chain rule,
\[
F'(t) = \DD{G}{x} (\varphi(t)) (x-y) .
\]
By Theorem~\ref{thm:mean_value},
\[
G(x)-G(y) = F(a)-F(0) = \sum_{i=1}^{r} \lambda_i \, F'(\xi_i) = \sum_{i=1}^{r} \lambda_i \, \DD{G}{x} (z_i) (x-y)
\]
with $\xi_i \in (0,1)$, $z_i = \xi_i x+(1-\xi_i)y$ lying on the line segment between $x$ and $y$,
$\lambda_i \ge 0$, and $\sum_{i=1}^{r} \lambda_i = 1$.
\end{proof}

Under the hypotheses of Proposition~\ref{pro:diff}, 
the map $G$ is $\conv(J_G)$-\rel.
By Proposition~\ref{pro:main} and equivalence~\eqref{eq:ker_B_S},
$G$ is injective
if $\ker(\conv(J_G)) = \{0\}$. 
This is essentially the statement of Corollary 2.1 in \cite{Coomes1989}, which follows from the most general injectivity result in \cite{McLeod1965}, Theorem~9.

\subsection{Discussion}

Let $X \subseteq \R^n$ be open and convex (and hence connected by rectangular, oriented paths),
and let $G \colon X \to \R^r$ have continuous partial derivatives. Then the entries of $D_G$ are connected, that is, intervals of $\R$. 
As shown above, 
$G$ is both $\mmQ(D_G)$-\rel and $\conv(J_G)$-\rel,
and both sets of matrices can be used to determine injectivity of $G$ (on cosets of a linear subspace $S$).
Note that 
\begin{equation*} 
J_G \subseteq \conv(J_G) \subseteq \mmQ(D_G) ,
\end{equation*}
where the latter inclusion follows from $J_G \subseteq \mmQ(D_G)$ and the convexity of $\mmQ(D_G)$,
and where both inclusions can be strict. 
By Proposition~\ref{pro:maincoset},
\[
0 \notin \mmQ(D_G)(S^*)
\quad \Rightarrow \quad
0 \notin \conv(J_G)(S^*)
\quad \Rightarrow \quad
G \text{ is \injS } .
\]
In order to guarantee injectivity, it is sufficient (but difficult) to determine the convex hull of the set of Jacobian matrices $J_G$.
It is easier to determine the interval matrix of partial derivatives, $D_G$.
Moreover, by Theorem~\ref{thm:D},
the whole class $\mmM(D_G)$, the set of all component-wise $D_G$-\rel maps on~$X$, is \injS
{\em if and only if} $0 \notin \mmQ(D_G)(S^*)$.

\begin{remark}
Let $X\subseteq \R^n$ be an open and convex set, and let $G \colon X \to \R^n$ be a polynomial map of degree at most two. 
Then each entry of the Jacobian matrix $\DD{G}{x}(x)$ is a polynomial of degree at most one. 
It follows that the set $J_G$ is convex and hence $G$ is $J_G$-\rel.
Proposition~\ref{pro:maincoset} implies that $G$ is \inj 
if $\det(\DD{G}{x}(x)) \neq 0$ for all $x \in X$,
that is, if the Jacobian matrix is non-singular.

This problem is related to the \emph{real Jacobian conjecture} for polynomial maps of degree at most two;
cf.~\cite[Theorem~62]{Wang:1980cc} and \cite[Theorem~2.4]{Bass:1982p770}.
\end{remark}

\begin{example} \label{ex}
Let $X=\R^2_>$ and $G \colon X \to \R^2$ 
be given by
\[
G(x_1,x_2)=(x_1 x_2,x_1^2 x_2).
\]
Then 
\[
J_G=\left\{
\begin{pmatrix} 
x_2 & x_1 \\ 2x_1 x_2 & x_1^2
\end{pmatrix}
\mid x \in \R^2_>
\right \} ,\quad 
D_G=
\begin{pmatrix} 
(0,+\infty) & (0,+\infty) \\ (0,+\infty) & (0,+\infty)
\end{pmatrix} ,
\]
and
\[
\mmQ(D_G) = \left\{
\begin{pmatrix} 
\k_1 & \k_2 \\ \k_3 & \k_4
\end{pmatrix}
\mid  
\k_1,\k_2,\k_3,\k_4>0  
\right \}.
\]
The map $G$ is both $\conv(J_G)$-\rel
and $\mmQ(D_G)$-\rel.
In this case, we have strict inclusions
\[
J_G \subset \conv(J_G) \subset \mmQ(D_G).
\]
To see this, note that the projection of $\conv(J_G)\subseteq \R^{2\times 2}$ onto the second column is the convex hull of $C:=\{(x_1,x_1^2)^T \mid x_1 \in \R_>\}$. Since $C$ is not convex, we conclude $J_G \neq \conv(J_G)$, and since $\conv(C) \subset \R^2_>$, we conclude $\conv(J_G) \neq \mmQ(D_G)$.

Let $S \subseteq \R^2$ be a linear subspace.
By Theorem~\ref{thm:D} and equivalence~\eqref{eq:ker_B_S}, 
the class $\mmM(D_G)$ is \injS if and only if $\ker(\mmQ(D_G)) \cap S = \{0\}$. 
Since $\mmQ(D_G)$ contains singular matrices,
$\mmM(D_G)$ is not \inj (for $S=\R^2$). However, $\ker(\mmQ(D_G))\cap S=\{0\}$ for $S= \im(1,1)^T$,
and $\mmM(D_G)$ is \injS.

Note that the map $G$ is in fact the generalized monomial map $\mu_B$ with
\[
B = \begin{pmatrix}
1 & 1 \\
2 & 1
\end{pmatrix}
\]
which is $\mmq(B)$-\rel with
\[
q(B) = 
\left\{ 
\begin{pmatrix} 
\k_1\lambda_1 & \k_1\lambda_2 \\ 
\k_2\lambda_1 & 2\k_2\lambda_2 
\end{pmatrix} 
\mid \k_1,\k_2,\lambda_1,\lambda_2\in \R_> \right\}
\]
and $\mmq(B) \subset \mmQ(D_G)$.
Now, $\mu_B$ is \inj if and only if $\ker(\mmq(B)) = \{0\}$,
cf.~Subsection~\ref{sec:monomial}.
Since all matrices in $\mmq(B)$ are non-singular, $\mu_B$ is \inj.

By using $q(B)$, having dependent entries, instead of $\mmQ(D_G)$, having independent entries,
we have concluded that $G$ is \inj.
\end{example}

\begin{example} \label{ex:jac} 
Let $X=\R^2_>$ and $G\colon X \to \R^2$ be given by
\[
G(x_1,x_2)=(x_1,x_1^2+x_2^2).
\]
Then 
\[
J_G = \left\{
\begin{pmatrix} 
1 & 0 \\ 2 x_1 & 2 x_2
\end{pmatrix}
\mid
x \in \R^2_>
\right\},
\]
and we obtain
\begin{align*}
\mmQ(D_G) & =  \left\{ 
 \begin{pmatrix} 
1 & 0 \\ \k_1 & \k_2
\end{pmatrix} 
\mid  
\k_1,\k_2>0  
\right \}  =  J_G.
\end{align*}
Hence, $J_G = \conv(J_G) = \mmQ(D_G)$.
Now, $\ker(\mmQ(D_G)) = \{0\}$ and hence the class $\mmM(D_G)$ is \inj.

Note that the map $G$ is $W$-monotonic with
\[
W= \begin{pmatrix} 
+ & 0 \\ + & +
\end{pmatrix} \in \{-, 0, +\}^{2 \times 2} 
\]
and $\mmQ(W)$-\rel with 
\[
\mmQ(W)=
\left\{  
\begin{pmatrix} 
\k_1 & 0 \\  \k_2 & \k_3
\end{pmatrix}  
\mid \k_1,\k_2,\k_3>0   
\right \} 
\]
and $\mmQ(D_G) \subset \mmQ(\mmW)$.  
Still, $\ker(\mmQ(D_G)) = \ker(\mmQ(W))=\{0\}$, 
and hence $\mmM(D_G)$ and the even larger class $\mmM(W)$ are injective.
\end{example}

\begin{remark} \label{rem:mon}
Generalized monomial maps on the positive orthant are differentiable.
The Jacobian matrix of $\k \had x^B \in \mmm(B)$ evaluated at $x$ is given by
\[
\diag(\k) \diag(x^B) B \diag(x^{-1}) \in \mmq(B).
\]
Conversely, the matrix $\diag(\k') B \diag(\lambda) \in \mmq(B)$
agrees with the Jacobian matrix of $\k' \had \lambda^B \had x^B \in \mmm(B)$ 
evaluated at $x=\lambda^{-1}$.
Hence, 
\[
J_{\mmm(B)} = \{ J_G \mid G \in \mmm(B) \} = \mmq(B) \subseteq \conv(J_{\mmm(B)}).
\] 
In general, 
the inclusion is strict.  
For the matrix $B$ in Example~\ref{ex}, the set $\mmq(B)$
agrees with the set of positive matrices $A=(a_{ij}) \in \R_>^{2\times 2}$ with $a_{11}a_{22} - 2a_{12}a_{21}=0$ which is not convex.
As a consequence, the results in Subsection~\ref{sec:monomial} do not follow from this section: $\ker(\conv(J_{\mmm(B)})) = \{0\}$ just implies injectivity of $\mmm(B)$, whereas $\ker(\mmq(B)) = \{0\}$ characterizes injectivity.
\end{remark}

The use of $\conv(J_G)$ or 
$\mmQ(D_G)$ 
allows to derive {\em domain-dependent} injectivity criteria.

\begin{example}[Example~\ref{ex} continued]
Let $X=(\ell_1,u_1) \times (\ell_2,u_2)\subseteq \R^2_>$ with $\ell_1<u_1$ and $\ell_2<u_2$.
Then
\[
\mmQ(D_G) = \left\{
\begin{pmatrix} 
\k_1 & \k_2 \\ \k_3 & \k_4
\end{pmatrix}
\mid
\begin{array}{l}
\k_1\in (\ell_2,u_2), \, \k_2\in (\ell_1,u_1), \\ 
\k_3\in (2\ell_1\ell_2,2u_1u_2), \, \k_4\in (\ell_1^2,u_1^2)\end{array} \right\}.
\]
The determinant of any matrix in $\mmQ(D_G)$ is 
negative if $u_1^2 u_2 < 2 \ell_1^2 \ell_2$.
In this case, $0\notin \mmQ(D_G)$ and hence $G$ is \inj on $X$.
\end{example}

\begin{remark} 
For continuously differentiable maps, 
the essence of Proposition~\ref{pro:diff} can be obtained by invoking the fundamental theorem of calculus instead of the mean value theorem for vector-valued maps. This approach has been used, for example, in \cite{gouze,BanajiPantea2016}. Let $X \subseteq \R^n$ be open and convex, and let $G \colon X \to \R^r$ be continuously differentiable. 
For $x,y\in X$ and $t \in [0,1]$, let $F(t)= G(tx+(1-t)y)$. By the fundamental theorem of calculus,
\[
G(x)-G(y)=\left( \int_{0}^1 F'(t) dt \right)(x-y).
\]
Hence, for any set of matrices $\mmB$ containing the integrals $\int_{0}^1 F'(t) dt $ for all pairs $x,y\in X$, $G$ is $\mmB$-\rel.  In particular, $\conv(J_G)$ has this property \cite[Lemma 3.11]{BanajiPantea2016}.
\end{remark}

\begin{remark}
Let $X\subseteq \R^n$ be a closed rectangular domain, and let $G\colon X \to \R^n$ be differentiable.
A result of Gale and Nikaid\^o \cite[Theorem 4]{Gale:1965p474} states that if $J_G$ consists of $P$-matrices, that is, of matrices having all principal minors positive, then $G$ is \inj on $X$. This result does not follow from our framework.
In particular, a matrix on the line segment between two P-matrices can be singular.
\end{remark}

% ========= ========= ========= ========= ========= ========= ========= =========

\section{Examples of composed maps} \label{sec:compose}

Compositions of generalized monomial and monotonic maps with a matrix have been studied already in Subsections~\ref{sec:polynom} and \ref{sec:monoton_monom}, \ref{sec:chem}.
In the following, we consider more examples of composed maps.
In particular, we study the injectivity of rational functions.

\subsection{Composition with a matrix}

First, we study the composition of a map $G$ (from $X \subseteq \R^n$ to $\R^r$) with a matrix $A$ (in $\R^{n \times r}$).   
By using Proposition~\ref{pro:framecomp}, 
sets of maps suitable to guarantee injectivity of $G$ can be used to guarantee injectivity of $A \circ G$.

\begin{example} \label{ex:G1}
Let $X=\R^2_>$, and let $H \colon X\to \R^2$ be given by
\[
H(x_1,x_2)= \left(\frac{x_2}{1+x_2}-x_1,\frac{x_1}{1+x_1}-x_2\right). 
\]
Then $H=A \circ G$ with 
\[
G(x_1,x_2) = 
\left(x_1,\frac{x_1}{1+x_1},x_2,\frac{x_2}{1+x_2}\right) ,
\quad 
A =
\begin{pmatrix} 
-1 & 0 & 0 & 1 \\ 0 & 1 & -1 & 0 
\end{pmatrix}
\]
and 
\[
D_G=
\begin{pmatrix} 
1 & 0 \\ (0,1) & 0 \\ 0 & 1 \\ 0 & (0,1) 
\end{pmatrix}, 
\quad 
\mmQ(D_G) =  
\left\{ 
\begin{pmatrix} 
1 & 0 \\ \k_1 & 0 \\ 0 & 1 \\ 0 & \k_2
\end{pmatrix} 
\mid  
\k_1,\k_2\in (0,1)
\right\} .
\]
Hence, $G \in \mmM(D_G)$ and $H \in A \circ \mmM(D_G)$.
Let $S \subseteq \R^2$
and recall $dX \cap S = S^*$ for $X=\R^2_>$.
By Theorem~\ref{thm:D}, $\del \mmM(D_G) = \mmQ(D_G)(S^*)$;
and by Proposition~\ref{pro:framecomp} and equivalence~\eqref{eq:ker_AB_S},
$A \circ \mmM(D_G)$ is \inj
if and only if
$\ker(A \mmQ(D_G)) \cap S = \{0\}$.
Now, any matrix in
\begin{align*}
A\mmQ(D_G) & = 
\left \{
\begin{pmatrix} 
-1 & \k_1 \\ \k_2 & -1
\end{pmatrix}  
\mid \k_1,\k_2\in (0,1) \right \}
\end{align*}
has positive determinant, 
$1 -\k_1 \k_2 > 0$.
Hence, $A \circ \mmM(D_G)$ is \inj.
\end{example}

\begin{example}[Variant of Example~\ref{ex:G1}] \label{ex:G1continued}
Let $X=\R^2_>$, and let $H \colon X\to \R$ be given by
\[
H(x_1,x_2) = \frac{x_1}{1+x_1} - x_2 .
\]
Then $H = A \circ G$ with
\[
G(x_1,x_2)=
\left( \frac{x_1}{1+x_1}, x_2 \right), \quad
A = 
\begin{pmatrix}
1 & -1
\end{pmatrix}
\]
and
\[
D_G =
\begin{pmatrix} 
(0,1) & 0 \\ 0 & 1 
\end{pmatrix} , \quad
\mmQ(D_G)= 
\left\{  
\begin{pmatrix} 
\k_1 & 0 \\ 0 & 1 
\end{pmatrix} 
\mid \k_1 \in (0,1) 
\right\} .
\]
As in Example~\ref{ex:G1},
$A \circ \mmM(D_G)$ is \injS
if and only if
$\ker(A \mmQ(D_G)) \cap S = \{0\}$.
Since $A \mmQ(D_G)= \begin{pmatrix} \k_1 & -1 \end{pmatrix}$ has non-trivial kernel, $A \circ \mmM(D_G)$ is not \inj (for $S=\R^2$). 
However, for the linear subspace $S = \im (1,1)^T$, $A \circ \mmM(D_G)$ is \injS.
\end{example}

\begin{example}[Example~\ref{ex:jac} continued]
The map $G \colon \R^2_> \to \R^2$, $G(x_1,x_2) = (x_1,x_1^2+x_2^2)$ can be written as
\[
G(x) = A\, x^B
\quad \text{with} \quad
A = \begin{pmatrix} 1 & 0 & 0 \\ 0 &1 & 1 \end{pmatrix} ,
\quad
B = \begin{pmatrix} 
1 & 0 \\ 2 & 0 \\ 0 & 2 \end{pmatrix}.
\]
Then
\[
q(B)= \left\{
\begin{pmatrix} 
\k_1\lambda_1 & 0 \\ 
2 \k_2 \lambda_1 & 0 \\ 
0 & 2 \k_3\lambda_2
\end{pmatrix}
\mid \k\in \R^3_>,\lambda\in \R^2_> 
\right \} 
\]
and
\[
A \, q(B) = 
\left\{  
\begin{pmatrix} 
\k_1\lambda_1 & 0 \\ 
2 \k_2 \lambda_1 & 2 \k_3\lambda_2
\end{pmatrix}
\mid \k\in \R^3_>,\lambda\in \R^2_> 
\right \}  
= \mmQ(W), 
\]
see Example~\ref{ex:jac}.
Hence, to guarantee injectivity of $G$, it is equivalent to consider 
$G$ as a map in $\mmM(W)$ or $A\circ m(B)$:

By Theorem~\ref{thm:D} with $S=\R^2$ and equivalence~\eqref{eq:ker_B_S},
$\mmM(W)$ is injective if and only if $\ker(\mmQ(W)) = \{0\}$.
By \eqref{eq:mon_poly} with $S=\R^2$ and equivalence~\eqref{eq:ker_AB_S},
$A \circ \mmm(B)$ is \inj
if and only if
$\ker(A \, \mmq(B)) = \{0\}$.
\end{example}

\subsection{Rational functions as compositions}

We consider rational functions and sets of maps suitable to characterize/guarantee their injectivity.
More generally, we consider compositions of monomial maps, namely functions $H \colon \R^n_> \to \R^m$ 
of the forms
\[
H(x) = 
F(\k \had x^B) \; \in \; \mmF \circ \mmm(B)
\]
with $B \in \R^{r \times n}$, $\k \in \R^r_>$, $F \in \mmF \subseteq (\R^r_> \to \R^m)$
and
\[
H(x) = 
\k \had G(x)^{A} \; \in \; \mmm(A) \circ \mmG
\]
with $G \in \mmG \subseteq (\R^n_> \to \R^r)$, $A \in \R^{m \times r}$, $\k \in \R^m_>$.

\begin{proposition} \label{prop:compose:monomials}
Let $S\subseteq \R^n$ be a linear subspace.
\begin{enumerate}[(a)]
\item
Let $B\in \R^{r\times n}$, $\mmF \subseteq \map{\R^r_>}{\R^m}$, and $\mmA \subseteq \R^{m\times r}$.
If $\mmF$ is $\mmA$-\rel and $\mmA \subseteq \mmF$, then
\[
\del_S (\mmF \circ m(B)) = \mmA q(B)(S^*).
\]
\item 
Let $\mmG \subseteq \map{\R^n_>}{\R^r_>}$, $\mmB \subseteq \R^{r \times n}$, and $A \in \R^{m\times r}$. If $\del_S \, \mmG \subseteq \mmB (S^*)$,
then
\[
\del_S (\mmm(A)\circ \mmG) \subseteq \mmq(A) \mmB (S^*).
\]
\end{enumerate}
\end{proposition}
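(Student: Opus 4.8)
The plan is to obtain both parts as immediate specializations of the composition Lemma~\ref{lem:comp}, feeding in the monomial affinity facts from Proposition~\ref{pro:monomial}. The two parts differ only in whether the monomial family sits on the inside or the outside of the composition, so the whole work is in matching the sets correctly and then quoting the right half of the lemma.

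For part (a) I would apply Lemma~\ref{lem:comp}(b) with the inner family $\mmG = \mmm(B) \subseteq \map{\R^n_>}{\R^r_>}$, the inner set of matrices $\mmB = \mmq(B)$, and $T = S^*$. Proposition~\ref{pro:monomial}(d) already supplies the required equality $\del_S\,\mmm(B) = \mmq(B)(S^*)$, i.e.\ the hypothesis $\del_S\,\mmG = \mmB(T)$ of that lemma. The two remaining hypotheses of Lemma~\ref{lem:comp}(b)---that $\mmF$ is $\mmA$-\rel and that $\mmA \subseteq \mmF$---are exactly the standing assumptions of part (a). The lemma then returns $\del_S(\mmF \circ \mmm(B)) = \mmA\,\mmq(B)(S^*)$, which is the asserted equality.

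For part (b) I would instead apply Lemma~\ref{lem:comp}(a) with the outer family $\mmF = \mmm(A) \subseteq \map{\R^r_>}{\R^m_>}$ and the outer set of matrices $\mmA = \mmq(A)$. The affinity hypothesis needed there, namely that $\mmm(A)$ is $\mmq(A)$-\rel, is precisely Proposition~\ref{pro:monomial}(c). The assumption $\del_S\,\mmG \subseteq \mmB(S^*)$ of part (b) is the inclusion $\del_S\,\mmG \subseteq \mmB(T)$ with $T = S^*$. Lemma~\ref{lem:comp}(a) then yields $\del_S(\mmm(A) \circ \mmG) \subseteq \mmq(A)\,\mmB(S^*)$, as required.

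There is no substantive obstacle here; the only point requiring care is the bookkeeping of which monomial family plays the inner versus the outer role, together with the check that the domains compose ($\R^n_> \to \R^r_>$ for the inner map and $\R^r_> \to \R^m_>$ for the outer map). In (a) the monomial maps form the inner family and the affinity is imported from the abstract outer family $\mmF$ through its defining hypotheses; in (b) the monomial maps form the outer family and furnish the affinity themselves via Proposition~\ref{pro:monomial}(c). Part (a) is an equality because we invoke the equality statement Lemma~\ref{lem:comp}(b), whereas part (b) is only an inclusion because we start from an inclusion and invoke the inclusion statement Lemma~\ref{lem:comp}(a).
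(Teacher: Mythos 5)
Your proposal is correct and follows exactly the paper's own proof: part (a) is Lemma~\ref{lem:comp}(b) combined with Proposition~\ref{pro:monomial}(d), and part (b) is Lemma~\ref{lem:comp}(a) combined with Proposition~\ref{pro:monomial}(c). The extra bookkeeping you spell out (which family is inner versus outer, and why one part is an equality while the other is only an inclusion) is accurate and merely makes explicit what the paper leaves to the reader.
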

\begin{proof}
(a) By Lemma~\ref{lem:comp}(b) and Proposition~\ref{pro:monomial}(d). \\
(b) By Lemma~\ref{lem:comp}(a) and Proposition~\ref{pro:monomial}(c). 
\end{proof}

\begin{example} \label{ex:comp1}
Let $H \colon \R^3_> \to \R^2$ be given by
\[
H(x_1,x_2,x_3)= 
\left( 
\frac{x_1x_2}{1+x_3},x_1x_2x_3^2  
\right).
\]
Clearly, $H$ is a function of the monomials $x_1x_2$ and $x_3$. In particular, $H(x)=F(x^B)$
with
\[
B=  
\begin{pmatrix} 
1 & 1 & 0 \\  0 & 0 & 1  
\end{pmatrix} ,\quad 
F(y_1,y_2)=  
\left( \frac{y_1}{1+y_2},y_1y_2^{2}  
\right).
\]
The map $F$ is $W$-monotonic with
\begin{align*}
W &= 
\begin{pmatrix} 
+ & - \\ + & + 
\end{pmatrix} ,
\end{align*}
and hence $H \in \mmM(W) \circ m(B)$.
By Theorem~\ref{thm:D},
$\mmM(W)$ is $\mmQ(W)$-\rel
and $\mmQ(W) \subseteq \mmM(W)$.
Let $S \subseteq \R^3$ be a linear subspace.
By Propositions~\ref{pro:maincoset} and \ref{prop:compose:monomials}(a),
$\mmM(W) \circ m(B)$ is \injS if and only if
$0 \notin \mmQ(W) \mmq(B) (S^*)$,
that is, $\ker( \mmQ(W)\mmq(B)) \cap S = \{0\}$, by equivalence~\eqref{eq:ker_B_S}.
Now,%
\begin{gather*}
\mmq(B) =  
\left\{  
\begin{pmatrix} 
\k_1\lambda_1 & \k_1\lambda_2 & 0 \\ 0 & 0 & \k_2\lambda_3
\end{pmatrix}
\mid \k_1,\k_2,\lambda_1,\lambda_2,\lambda_3>0 
\right\}  \\ 
= 
\left\{  
\begin{pmatrix} 
\lambda_1 & \lambda_2 & 0 \\ 0 & 0 &\lambda_3
\end{pmatrix}
\mid \lambda_1,\lambda_2,\lambda_3>0 
\right\}, \\
\mmQ(W) =  
\left\{  
\begin{pmatrix} 
\mu_1 & -\mu_2 \\ \mu_3 & \mu_4
\end{pmatrix}
\mid  \mu_1,\mu_2,\mu_3,\mu_4>0 \right\} ,
\intertext{and hence}
\mmQ(W) \mmq(B) = 
\left\{  
\begin{pmatrix} 
\mu_1\lambda_1 & \mu_1\lambda_2 & -\mu_2\lambda_3 \\
\mu_3 \lambda_1 &  \mu_3 \lambda_2 & \mu_4 \lambda_3
\end{pmatrix}
\mid \mu_1,\mu_2,\mu_3>0,\lambda_1,\lambda_2,\lambda_3>0
\right\}.  
\end{gather*}
Let $S\subseteq \R^3$ be given by $x_1-x_2+x_3=0$. 
The determinant of the matrix 
\[
\begin{pmatrix} 
1 & -1 & 1 \\ 
\mu_1 \lambda_1 & \mu_1 \lambda_2 & -\mu_2 \lambda_3 \\
\mu_3 \lambda_1 & \mu_3 \lambda_2 &  \mu_4 \lambda_3
\end{pmatrix}
\] 
is strictly positive for all parameters,
and hence $\mmM(W) \circ m(B)$
is \injS,
see Equation~\eqref{total_determinental}.

Finally, $H$ is also a map in $\mmM(W_H)$ with
\[
W_H = \begin{pmatrix}
+ & + & - \\ + & + & +
\end{pmatrix}.
\]
However, $0 \in \mmQ(W_H)(S^*)$ and hence $\mmM(W_H)$ is not injective on cosets of $S$, see~\eqref{eq:MW}.
\end{example}

% ========= ========= ========= ========= ========= ========= ========= =========

\paragraph{Acknowledgments.}
EF was supported by a Sapere Aude Starting Grant from the Danish Research Council for Independent Research.
SM was supported by the Austrian Science Fund (FWF), project 28406. GR was supported by the FWF, project 27229.

% ========= ========= ========= ========= ========= ========= ========= =========

\small
%\bibliographystyle{plain}
%\bibliography{signs}

\end{document}